\definecolor{color1}{RGB}{27,158,119}
\definecolor{color2}{RGB}{217,95,2}
\definecolor{color3}{RGB}{117,112,179}
\definecolor{color4}{RGB}{231,41,138}
\numberwithin{equation}{section}
\newcommand{\R}{{\mathbb R}}\newcommand{\N}{{\mathbb N}}
\newcommand{\Z}{{\mathbb Z}}\newcommand{\C}{{\mathbb C}}
\DeclareMathOperator{\arccosh}{arccosh}
\DeclareMathOperator{\sech}{sech}
\DeclarePairedDelimiter{\norm}{\lVert}{\rVert}
\newtheorem{theorem}{Theorem}
\newtheorem{lemma}{Lemma}[section]
\newtheorem{definition}{Definition}[section]
\newtheorem{proposition}{Proposition}[section]
\newtheorem{remark}{Remark}[section]
\newtheorem{example}{Example}[section]
\newtheorem{assumption}{Assumption}[section]
\title[Traveling waves in periodic metric graphs]{Traveling waves in periodic metric graphs\\ via spatial dynamics}
\author[S. Le Coz]{Stefan Le Coz}
\address{Institut de Mathématiques de Toulouse ; UMR5219, Université de Toulouse ; CNRS, UPS IMT, F-31062 Toulouse Cedex 9,  France}
\email{stefan.le-coz@math.univ-toulouse.fr}
\author[D. Pelinovsky]{Dmitry E. Pelinovsky}
\address{Department of Mathematics, McMaster University, Hamilton, Ontario, L8S 4K1, Canada}
\email{pelinod@mcmaster.ca}
\author[G. Schneider]{Guido Schneider}
\address{
Institut f\"{u}r Analysis, Dynamik und Modellierung,
Universit\"{a}t Stuttgart,
Pfaffenwaldring 57, D-70569 Stuttgart, Germany
}
\email{guido.schneider@mathematik.uni-stuttgart.de}
\thanks{The work of S. L. C. is 
  partially supported by ANR-11-LABX-0040-CIMI and the ANR project NQG ANR-23-CE40-0005}
\date{\today}
\begin{document}

\begin{abstract}
The purpose of this work is to introduce a concept of traveling waves in the setting of periodic metric graphs. It is known that the nonlinear Schr\"{o}dinger (NLS) equation on periodic metric graphs can be reduced asymptotically on long but finite time intervals to the homogeneous NLS equation, which admits traveling solitary wave solutions. In order to address persistence of such traveling waves beyond finite time intervals, we formulate the existence problem for traveling waves via spatial dynamics. There exist no spatially decaying (solitary) waves because of an infinite-dimensional center manifold in the spatial dynamics formulation. Existence of traveling modulating pulse solutions which are solitary waves with small oscillatory tails at very long distances from the pulse core is proven by using a local center-saddle manifold. We show that the variational formulation fails to capture existence of such modulating pulse solutions even in the singular limit of zero wave speeds where true (standing) solitary waves exist. Propagation of a traveling solitary wave and formation of a small oscillatory tail outside the pulse core is shown in numerical simulations of the NLS equation on the periodic graph. 
\end{abstract}

\maketitle


\section{Introduction}
\label{sec-1}

Traveling waves in nonlinear models have been widely studied due to their important applications in physics, chemistry, and biology. 

If the nonlinear model is formulated as a partial differential equation (PDE) for $u(t,x) : \R \times \R \to \R$ in time $t$ and spatial coordinate $x$, then traveling waves are solutions of the form $u(t,x) = \phi(x - ct)$, where $c$ is the wave speed and $\phi(x) : \R \to \R$ is the wave profile satisfying an ordinary differential equation (see e.g. \cite{Pava}). 

If the nonlinear model is formulated as a lattice differential equation (LDE) for $\{ u_j(t) \}_{j \in \Z} : \R \to \R^{\Z}$ in time $t$ on the lattice $\Z$, then traveling waves are solutions of the form $u_j(t) = \phi(hj-ct)$, where $h$ is the lattice spacing, $c$ is the wave speed, and $\phi : \R \to \R$ is the wave profile satisfying a differential advance-delay equation (see e.g. \cite{Pankov}). 

In the context of traveling modulating pulses in spatially homogeneous systems or standing modulating pulses in spatially periodic systems, solutions of nonlinear PDEs have been considered in the form 
$u(t,x) = \phi(x-ct,x)$, where $\phi$ is periodic with respect to $x$ 
and spatially decaying with respect to $\xi = x-ct$ with $c$ being the wave speed (see e.g.  \cite{Groves1,Groves2,LeScarret,PelSch08}). Methods of spatial dynamics have been used to construct modulating pulse solutions which decay to oscillatory tails of sufficiently small amplitude. Traveling modulating pulses in spatially periodic systems involve three spatial scales and solutions are written in the form $u(t,x) = \phi(\xi,z,x)$, where $\phi$ is periodic with respect to $x$ and $z =  kx - \sigma t$ and spatially decaying with respect to $\xi = x - ct$ with $k$ being the spatial wave number, $\sigma$ being the temporal wave number, and $c$ being the wave speed \cite{DPS}.  

The purpose of this work is to introduce a concept of traveling waves in a nonlinear PDE posed on a periodic metric graph. For the sake of simplicity, we consider the focusing cubic nonlinear Schr\"{o}dinger  equation (NLS). In the setting of metric graphs, the NLS equation has been widely studied due to many applications in quantum communications and optics, see the surveys \cite{DovettaRev,KNP22}.

\subsection{Formulation of the evolution model}

Let $\Gamma$ be a periodic graph written in the form 
$$
\Gamma = \bigoplus_{n \in \Z} \Gamma_n,
$$
where each cell $\Gamma_n$ is identical to others for $n \in \Z$. For the particular necklace graph shown in Figure \ref{fig:neckless}, $\Gamma_n$ includes a line segment $\Gamma_{n,0}$ of length $L_1$ and two semi-circles $\Gamma_{n,+} \oplus  \Gamma_{n,-}$ each of length $L_2$. 
If the period of $\Gamma$ is normalized to $2\pi$, then $L_1 + L_2 = 2\pi$ 
whereas $\Gamma_{n,0} $ and $\Gamma_{n,\pm} $ are identified isometrically with the intervals $I_{n,0} = [2 \pi n, 2 \pi n+ L_1 ]$ and 
$I_{n,\pm} = [2 \pi n+ L_1 ,2 \pi (n+1) ]$.

 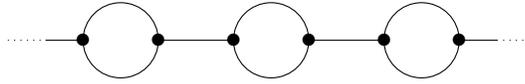
\begin{figure}[htpb!]
 	\centering
	\begin{tikzpicture}
	\node at (-2.5,0) {$\bullet$}; \node at (-1.5,0) {$\bullet$};
	\node at (-0.5,0) {$\bullet$}; \node at (0.5,0) {$\bullet$};
	\node at (1.5,0) {$\bullet$}; \node at (2.5,0) {$\bullet$};
	\draw (-2,0) circle (0.5); \draw (0,0) circle (0.5); \draw (2,0)
	circle (0.5); \draw (-1.5,0) -- (-0.5,0); \draw (0.5,0) --
	(1.5,0); \draw (-3,0) -- (-2.5,0); \draw (2.5,0) -- (3,0);
	\draw[dotted] (-3.5,0) -- (-3,0); \draw[dotted] (3,0) --
	(3.5,0);
	\end{tikzpicture}\hfill
	\caption{Schematic representation of a necklace graph.}
	\label{fig:neckless}
\end{figure}

Let $\psi(t,x) : \R \times \Gamma \to \mathbb{C}$ be the wave function defined  piecewise with components $\psi_{n,0}$ on $ I_{n,0}  $ and components $ \psi_{n,\pm} $ on $ I_{n,\pm}  $ subject to the following Neumann--Kirchhoff (NK) boundary conditions at each vertex point:
\begin{equation}
\begin{cases}    
\psi_{n,0}(t,2 \pi n+ L_1) = \psi_{n,+}(t,2 \pi n+ L_1) = \psi_{n,-}(t,2 \pi n+ L_1),  \\
\partial_x \psi_{n,0}(t,2 \pi n+ L_1) = \partial_x \psi_{n,+}(t,2 \pi n+ L_1 )+ \partial_x \psi_{n,-}(t,2 \pi n+ L_1),
\end{cases}
\label{vertex-1}
\end{equation}
and
\begin{equation}
\begin{cases}
\psi_{n,+}(t,2 \pi (n+1))= \psi_{n,-}(t,2 \pi (n+1)) = \psi_{n+1,0}(t,2 \pi (n+1)),\\
\partial_x \psi_{n,+}(t,2 \pi (n+1)) + \partial_x \psi_{n,-}(t,2 \pi (n+1)) = \partial_x \psi_{n+1,0}(t,2 \pi (n+1)). 
\end{cases}
\label{vertex-2}
\end{equation}
The focusing cubic NLS equation can be written in the normalized form:
\begin{equation} \label{NLS}
i \partial_t \psi +  \partial_x^2 \psi + 2 |\psi|^2 \psi = 0, \quad \psi(t,x) : \R \times \Gamma \to \mathbb{C}.
\end{equation}

Standing time-periodic and spatially decaying wave solutions of the NLS equation on metric graphs have been considered in many details in the recent literature, see \cite{KNP22} for a survey. For the necklace graph, the NLS equation (\ref{NLS}) on $\Gamma$ was reduced to homogeneous NLS and Dirac equations on the real line $\R$ by using homogenization methods in \cite{GilgPS}. This gives a finite-time approximation of the standing time-periodic and spatially decaying waves by the solitary wave solutions of the NLS and Dirac equations. Methods of discrete spatial dynamics were used in \cite{PS-2017} to obtain the standing wave solutions directly due to separation of the variables $t \in \mathbb{R}$ and $x \in \Gamma$. Furthermore, variational methods were developed to study the standing wave solutions as minimizers of energy functionals, see e.g. \cite{Dovetta,Pankov-per}. Standing waves in the limit of large energies were also studied by using asymptotic methods in \cite{Berkolaiko}. Extension of the existence results for standing time-periodic waves in the nonlinear Klein--Gordon equation on the necklace graph can be found in \cite{Maier}. 

Traveling long waves were considered by using the homogenization of the regularized Boussinesq equation on the periodic graph 
$\Gamma$ within the validity of the KdV (Korteweg--de Vries) approximation \cite{Dull}. However, the variables $t \in \mathbb{R}$ and $x \in \Gamma$ are not separated for traveling waves and hence it is not clear how to formulate the existence problem for such traveling wave solutions. This problem is addressed in our work. 

\subsection{Main results}

In order to introduce the main results, we recall the Floquet--Bloch spectral theory for the operator $-\partial_x^2$ on $L^2(\Gamma)$ (see \cite{GilgPS,PS-2017} and earlier works \cite{KL,KZ,Niikuni}). The spectrum of $-\partial_x^2$ in $L^2(\Gamma)$ is purely continuous and can be recovered with the Bloch eigenfunctions 
\begin{equation}
\label{Bloch-wave}
w(x) = e^{i \ell x} f(\ell,x), \quad f(\ell,x) = f(\ell,x+2\pi) = f(\ell+1,x) e^{ix}, \quad (\ell,x) \in \R \times \Gamma,
\end{equation}
where $f(\ell,x) = f(\ell,x+2\pi)$ can be written in the component form as 
$f_n(\ell,x) = f_{n+1}(\ell,x+2\pi)$ for $x \in \Gamma_n$.
The Bloch eigenfunctions are found from solutions of the spectral problem 
\begin{equation}
\label{spectral-prob}
-(\partial_x + i\ell)^2 f(\ell,x) = \omega(\ell) f(\ell,x), \quad (\ell,x) \in \R \times \Gamma_0
\end{equation}
subject to the boundary conditions
\begin{equation}
\begin{cases}
f_{0}(\ell,L_1) = f_{+}(\ell,L_1) = f_{-}(\ell,L_1),  \\
(\partial_x + i \ell) f_{0}(\ell,L_1) = (\partial_x + i \ell) f_{+}(\ell,L_1) + (\partial_x + i \ell) f_{-}(\ell,L_1),  
\end{cases}
\label{vertex-11}
\end{equation}
and
\begin{equation}
\begin{cases}
f_{+}(\ell,2 \pi)= f_{-}(\ell,2 \pi) = f_{0}(\ell,0),\\
(\partial_x + i \ell) f_{+}(\ell,2 \pi) + (\partial_x + i \ell) f_{-}(\ell,2 \pi) = (\partial_x + i \ell) f_{0}(\ell,0), 
\end{cases}
\label{vertex-12}
\end{equation}
where $\Gamma_0$ is the basic cell of the periodic graph $\Gamma$. 
By the spectral theorem, the spectrum of $-\partial_x^2$ in $L^2(\Gamma)$ is given by the union of the spectral bands 
\begin{equation}
\sigma(-\partial_x^2) = \bigcup_{m \in \N} \{ \omega_m(\ell): \;\; \ell \in \mathbb{B} \},
\label{specrum}
\end{equation}
where $\mathbb{B} := [-\frac{1}{2},\frac{1}{2})$ is the so-called Brillouin zone, to which $(\omega_m)$ might be restricted by periodicity. 

The homogenization result from \cite{GilgPS} can be stated as follows. Pick $m_0 \in \N$ and $\ell_0 \in \mathbb{B}$ and assume that $\omega_m(\ell_0) \neq \omega_{m_0}(\ell_0)$ for every $m \neq m_0$. For every $C_0 > 0$ and $T_0 > 0$, there exists $\varepsilon_0 > 0$ and $C > 0$ such that for all solutions $A \in C^0(\mathbb{R},H^3(\R))$ of the homogeneous NLS equation 
	\begin{equation}
	\label{NLS-homog}
	i \partial_T A + \frac{1}{2} \omega_{m_0}''(\ell_0) \partial_X^2 A + 2 \gamma |A|^2 A = 0, \quad \gamma := \frac{\| f_{m_0}(\ell_0,\cdot) \|^4_{L^4(\Gamma_0)}}{\| f_{m_0}(\ell_0,\cdot) \|^2_{L^2(\Gamma_0)}},
	\end{equation}
	satisfying 
	$$
	\sup_{T \in [0,T_0]} \|A(T,\cdot) \|_{H^3} \leq C_0
	$$
	and for all $\varepsilon \in (0,\varepsilon_0)$, there are solutions 
	$\psi \in C^0([0,T_0/\varepsilon^2],L^{\infty}(\Gamma))$ of the NLS equation (\ref{NLS}) satisfying 
	\begin{equation}
    \label{eq:thm1_Gilg}	    
	\sup_{t \in [0,T_0/\varepsilon^2]} \sup_{x \in \Gamma} | \psi(t,x) - \varepsilon A(\varepsilon^2 t, \varepsilon (x-ct)) f_{m_0}(\ell_0,x) e^{i \ell_0 x} e^{-i \omega_{m_0}(\ell_0)t} | \leq C \varepsilon^{3/2},
	\end{equation}
	where $c = \omega'_{m_0}(\ell_0)$.

The focusing cubic NLS equation (\ref{NLS-homog}) on the real line $\mathbb{R}$ admits a family of traveling ${\rm sech}$-soliton solutions if $\omega_{m_0}''(\ell_0) > 0$ since $\gamma > 0$. 
By \eqref{eq:thm1_Gilg}
if the initial data for $\psi(0,\cdot) \in L^{\infty}(\Gamma)$ is close to the family of traveling ${\rm sech}$-solitons, the solution of the NLS equation (\ref{NLS}) on the periodic graph $\Gamma$ is approximated by the traveling solitary waves on long but finite time intervals. The following two main questions are addressed in this work:
\begin{itemize}
	\item Can we define the traveling wave solutions on the periodic graph $\Gamma$ for all times? 
	\item Do the traveling waves remain spatially decaying for all times?
\end{itemize}

For the first question, if we think of the traveling wave reductions of the NLS equation (\ref{NLS}) on the periodic graph $\Gamma$, we can try substitutions of the form of either $\psi(t,x) = \psi(x-ct,x)$ or $\psi(t,x) = \psi(t,x-ct)$. However, these solution forms mix up the boundary conditions (\ref{vertex-1}) and (\ref{vertex-2}) which are no longer defined at the time-independent points of the spatial line. Therefore, these solution forms do not express the concept of traveling waves which move across the periodic graph $\Gamma$ with a constant propagation speed. 

As the main results of our paper, we give a positive answer to the first question and a generally negative answer to the second question. We define traveling waves by using the spatial dynamics formulation of the NLS equation 
(\ref{NLS}) on the periodic graph $\Gamma$. Then, we study the linearized system near the zero equilibrium and show that there exist infinitely many purely imaginary eigenvalues of the linearized system which correspond to the center manifold of the spatial dynamical system. As a result, no true 
spatially decaying solutions to zero (homoclinic orbits) exist generally at the intersection of stable and unstable manifolds of the spatial dynamical system. The best approximation of the spatially decaying traveling wave solutions are traveling modulating pulses with small oscillatory tails far away from the pulse center, similarly to the traveling modulating pulses in a spatially periodic PDE studied in \cite{DPS} and similarly to the homoclinic orbits in a temporally periodic LDE studied in \cite{Chong}. Such solutions are identified in the following theorem.

\begin{theorem}
	\label{theorem-NLS-periodic}
	Pick $m_0 \in \N$ and $\ell_0 \in \mathbb{B}$ with $\omega_{m_0}''(\ell_0) > 0$ and define 
	$$
	\begin{cases}
	\sigma_0 = -\omega_{m_0}(\ell_0) + \ell_0 \omega_{m_0}'(\ell_0), \\
	c_0 = \omega_{m_0}'(\ell_0).
	\end{cases}
	$$
	Assume that the set of purely imaginary roots $\lambda \in i \R$  of the characteristic equations 
	$$
	\sigma_0 + i c_0 \lambda + \omega_m(-i \lambda) = 0, \quad m \in \N,
	$$
	admit only simple nonzero roots, with the exception of a double zero root for $m = m_0$ and $\ell = \ell_0$. For every $N_0 > 0$, there exist $\varepsilon_0 > 0$ and $C_0 > 0$ such that for every $\varepsilon \in (0,\varepsilon_0)$, the NLS equation (\ref{NLS}) admits a traveling wave solution of the form
	$$
	\psi_n(t,x) = \phi(x - ct,x - 2\pi n) e^{i \sigma t}, \qquad \phi(\xi,x) : \R \times \Gamma_0 \to \C
	$$
	with $\sigma = \sigma_0 + \varepsilon^2$, $c = c_0$ and $\phi$ satisfying 
	$$
\sup_{\xi \in [-N_0 \varepsilon^{-1},N_0 \varepsilon^{-1}]} \sup_{x \in \Gamma_0} | \phi(\xi,x) - \varepsilon A(\varepsilon \xi) f_{m_0}(\ell_0,x) e^{i \ell_0 \xi} | \leq C_0 \varepsilon^{2},
$$	
where $A \in H^{\infty}(\R)$ is a solution of the stationary homogeneous NLS equation 
	\begin{equation}
	\label{NLS-stat}
\frac{1}{2} \omega_{m_0}''(\ell_0) A'' - A + 2 \gamma |A|^2 A = 0, \quad \gamma := \frac{\| f_{m_0}(\ell_0,\cdot) \|^4_{L^4(\Gamma_0)}}{\| f_{m_0}(\ell_0,\cdot) \|^2_{L^2(\Gamma_0)}}.
	\end{equation}
\end{theorem}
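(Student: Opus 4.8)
The plan is to recast the existence problem as a \emph{spatial dynamics} system in which the traveling coordinate $\xi = x - ct$ plays the role of the evolution variable, to analyze the linearization at the origin, and then to continue the $\sech$-soliton of (\ref{NLS-stat}) to a globally bounded orbit by a center-saddle manifold reduction, following the strategy for modulating pulses in \cite{DPS}. First I would substitute $\psi_n(t,x) = \phi(x-ct,x-2\pi n)e^{i\sigma t}$ into (\ref{NLS}) and the vertex conditions (\ref{vertex-1})--(\ref{vertex-2}). A direct computation gives a second-order equation in $\xi$ of the schematic form $\partial_\xi^2\phi + (2\partial_x - ic)\partial_\xi\phi + (\partial_x^2 - \sigma)\phi + 2|\phi|^2\phi = 0$, where $\partial_x$ now acts on the $2\pi$-periodic cell variable and the NK conditions at the internal and matching vertices of $\Gamma_0$ become $\xi$-local boundary conditions. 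Writing $U = (\phi,\partial_\xi\phi)$ turns this into a first-order system $\partial_\xi U = \mathcal{L}(\sigma,c)U + \mathcal{N}(U)$ on a Hilbert space $\mathcal{X}$ of sections over $\Gamma_0$ carrying the NK vertex conditions. This problem is \emph{ill-posed} as a Cauchy problem (it is elliptic in $\xi$), so I treat it by invariant-manifold rather than semigroup methods; it also inherits a reversibility symmetry under $\xi\mapsto-\xi$ combined with complex conjugation, which I will use to pin down a symmetric orbit.

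Next I would analyze the spectrum of $\mathcal{L}$. Bloch-decomposing in the cell variable diagonalizes the operator, and an exponential ansatz $\phi(\xi,x) = e^{\lambda\xi} f_m(\ell,x)$ with $\ell = -i\lambda$ reproduces exactly the characteristic equations $\sigma_0 + i c_0\lambda + \omega_m(-i\lambda) = 0$ of the statement (with $\sigma = \sigma_0 + \varepsilon^2$ a small perturbation). Because $c_0 = \omega_{m_0}'(\ell_0)$, the root associated with the critical mode satisfies both the equation and its $\lambda$-derivative, so it is a double root producing a $2\times 2$ Jordan block; the remaining purely imaginary roots are, by hypothesis, simple and generate infinitely many neutral oscillatory directions, while the rest of the spectrum stays off the imaginary axis and yields an exponential dichotomy (the stable/unstable, or ``saddle'', part). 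The decisive structural feature --- and the source of every difficulty below --- is that the center spectrum is infinite-dimensional, so a classical finite-dimensional center-manifold reduction is unavailable.

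On the Jordan block I would then introduce the envelope scaling $\phi(\xi,x) = \varepsilon A(\varepsilon\xi) f_{m_0}(\ell_0,x)e^{i\ell_0\xi} + \varepsilon^2\phi_1 + \cdots$ with slow variable $\zeta = \varepsilon\xi$. Projecting the $O(\varepsilon^3)$ balance onto the critical Bloch mode (a Fredholm solvability condition) produces the stationary homogeneous NLS (\ref{NLS-stat}), with the dispersion coefficient $\tfrac12\omega_{m_0}''(\ell_0)$ coming from the band curvature and the coefficient $\gamma$ from the Bloch-mode integrals; its bright soliton $A(\zeta) = \alpha\,\sech(\beta\zeta)$ is the homoclinic-to-zero orbit I aim to continue. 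To control the remaining directions I would apply a near-identity normal-form transformation pushing the coupling between the critical mode, the neutral oscillatory modes, and the saddle modes to high order in the amplitude, using the simplicity of the nonzero imaginary roots to avoid resonances. Using the exponential dichotomy I would construct a \emph{local center-saddle manifold} on which the hyperbolic coordinates are slaved to the central ones through a variation-of-constants contraction, reducing to a flow that is a perturbation of the soliton ODE. Rather than a true homoclinic to $0$ (which does not persist because of the neutral oscillations), I would produce a \emph{generalized homoclinic} asymptotic to a small oscillatory orbit, symmetric under the reversibility so that its stable and unstable branches match, and whose core coincides with the soliton. A residual estimate for the approximation $\varepsilon A(\varepsilon\xi)f_{m_0}(\ell_0,x)e^{i\ell_0\xi}$ followed by a fixed-point/Gronwall argument on the window $\varepsilon\xi\in[-N_0,N_0]$ then yields the stated $O(\varepsilon^2)$ bound; the restriction to a finite slow interval is exactly what makes the error controllable without resolving the global tail.

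The hard part is the last step, and specifically the infinite-dimensional center. The neutral oscillatory modes form an infinite family of frequencies, so one must simultaneously control small-divisor and resonance interactions between them and the soliton core --- this is precisely where the simplicity of the nonzero imaginary roots is essential --- and show that the amplitude of the unavoidable oscillatory tail is small enough (beyond the $O(\varepsilon^2)$ accuracy on the core window) that it does not destroy the approximation. Establishing the center-saddle manifold with uniform estimates in this infinite-dimensional neutral setting, in place of the classical finite-dimensional center-manifold theorem, is the principal technical obstacle, and it is the reason the conclusion is a modulating pulse with small tails rather than a genuine spatially decaying solitary wave.
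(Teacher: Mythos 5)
Your proposal follows essentially the same route as the paper: the $\xi$-local spatial dynamics formulation (your second-order equation is exactly the paper's equation (\ref{NLS-potential})), the characteristic equations with a double root splitting off $i\R$ and infinitely many simple imaginary roots, the Fredholm solvability at order $\varepsilon^3$ yielding (\ref{NLS-stat}), near-identity transformations, a local center-stable (center-saddle) manifold built by variation of constants on the window $\xi\in[-N_0\varepsilon^{-1},N_0\varepsilon^{-1}]$ with Gronwall estimates, and reversibility to select the symmetric orbit and invert the linearized amplitude equation. The one technical obstacle you flag --- uniform control of the infinite-dimensional neutral part --- is handled in the paper not by small-divisor analysis but by imposing uniform bounds on the center, stable, and unstable propagators (Assumption \ref{ass-semigroup}), after which the finite-window construction goes through exactly as you outline.
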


We note that the stationary NLS equation (\ref{NLS-stat}) is a traveling wave reduction of the time-dependent NLS equation (\ref{NLS-homog}), hence the {\rm sech}-solitons of the NLS equation (\ref{NLS-homog}) give the leading-order approximation of the traveling wave solutions of the NLS equation (\ref{NLS}) on the periodic graph $\Gamma$. These solutions do not decay to zero for $|\xi| \geq N_0 \varepsilon^{-1}$ but approach the oscillatory remainder term.

The main novelty and significance of this work is not the proof of Theorem \ref{theorem-NLS-periodic} but rather the definition and analysis of the existence problem for traveling waves on periodic metric graphs. 
If the periodic graph $\Gamma$ is replaced by a ladder graph shown in Figure \ref{fig:ladder}, then the spatial dynamical system has exact solutions for traveling solitary waves, due to the additional symmetry of the ladder graph. This gives one example where the oscillatory tails far away from the pulse center are identically zero in Theorem \ref{theorem-NLS-periodic}.

  \begin{figure}[htpb!]
  	\centering
	\begin{tikzpicture}[rotate=90]
	\def\ladderHeight{6} 
	\def\spacing{1.5} 
	\def\railDistance{1.5} 
	\def\dotSize{0.1} 
	
	\foreach \x in {0, \railDistance} {
		\draw (\x, -0.5*\spacing) -- (\x, \ladderHeight * \spacing+0.5*\spacing);
		\draw[dotted] (\x, -0.5*\spacing) -- (\x,  -0.8*\spacing);
		\draw[dotted] (\x, \ladderHeight * \spacing+0.5*\spacing) -- (\x, \ladderHeight * \spacing+0.8*\spacing);
	}
	
	\foreach \y in {0,...,\ladderHeight} {
		\draw (0, \y * \spacing) -- (\railDistance, \y * \spacing);
		
		\fill (0, \y * \spacing) circle (\dotSize);
		\fill (\railDistance, \y * \spacing) circle (\dotSize);
	}
	\end{tikzpicture}\hfill
	\caption{Schematic representation of a ladder graph.}
	\label{fig:ladder}
\end{figure}
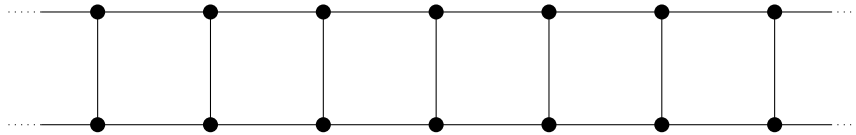

\subsection{Organization of the paper}

In Section \ref{sec-2}, we give the spatial dynamics formulation of the existence problem for traveling waves of the NLS equation (\ref{NLS}) on the necklace graph $\Gamma$ of Figure \ref{fig:neckless}. In the homogeneous case with $L_2 = 0$, this formulation admits exact traveling {\rm sech}-soliton solutions, as we show in Appendix \ref{app-A}. In the general case, the spatial dynamical system can be formulated variationally but, as we show in Appendix \ref{app-B}, no minimizers of the energy functional exist even in the singular limit of zero wave speed where the true standing solitary waves exist. 

Section \ref{sec-3} contains the analysis of the linearized system at the zero solution. We show that the linearized system admits infinite-dimensional center, stable, and unstable manifolds. Bifurcations of small traveling waves are identified from the existence and splitting of double eigenvalues off the imaginary axis.  However, the presence of infinitely many purely imaginary eigenvalues suggests that the traveling modulating pulse solutions generally exhibit small oscillatory tails at very long distances from the pulse core. 

Bifurcations of traveling modulating pulses are studied in Section \ref{sec-7}, where we formally derive the stationary NLS equation (\ref{NLS-stat}) and construct a local center-stable manifold of the spatial dynamical system satisfying the reversibility constraints. Estimates on the center-stable manifold justify the approximation bound formulated in Theorem \ref{theorem-NLS-periodic}. 

Section \ref{sec-numerics} illustrates the robustness of traveling solitary waves with the help of numerical simulations of the initial-value problem for the NLS equation (\ref{NLS}). We do observe the small oscillatory tails outside the pulse core but their amplitude is small if the parameters of the traveling waves are close to the bifurcation threshold. 

Section \ref{app-C} gives the analysis of the traveling waves for the ladder graph in Figure \ref{fig:ladder}, where the spatial dynamical system admits exact solutions for true traveling solitary waves. We show that the spatial dynamical system also admits other bifurcations but no true solitary waves generally exist for other bifurcations due to the same reason as for the necklace graph in Figure \ref{fig:neckless}.

\section{Spatial dynamics formulation}
\label{sec-2}

To define the traveling wave solutions for the NLS equation 
(\ref{NLS}), we first reset the wave function as an infinite-dimensional vector on a fixed spatial domain. In other words, we replace
$\psi_n(t,x-2\pi n)$ for $x \in \Gamma_n$ and $n \in \Z$ by 
$\psi_n(t,x)$, where the new component $\psi_n(t,x)$ is now defined for $(t,x) \in \R\times \Gamma_0$, with $\Gamma_0$ being the basic cell of the periodic graph $\Gamma$. Traveling waves are solutions of the form:
\begin{equation}
\psi_n(t,x) = \psi_{n+1}(t + 2\pi c^{-1},x) e^{-i 2\pi c^{-1} \sigma}, \quad t \in \R, \quad x \in \Gamma_0,
\label{trave-solutions-components}
\end{equation}
with the two parameters given by the propagation speed $c \neq 0$ and the frequency $\sigma \in \mathbb{R}$. The reason why the solution form (\ref{trave-solutions-components}) expresses the concept of traveling waves is that the same profile is repeated at each site $n \in \Z$ of the lattice with some time delay $2\pi c^{-1}$ between the two adjacent sites and with rotation of the complex phase. Solutions of the form (\ref{trave-solutions-components}) are equivalent to the traveling wave reduction:
\begin{equation}
\label{trav-solution}
\psi_n(t,x) = \phi(2\pi n - ct, x) e^{i \sigma t}, \quad \phi(\xi,x) : \R \times \Gamma_0 \to \mathbb{C}.
\end{equation}
The NLS equation (\ref{NLS}) for the traveling wave solutions of the form (\ref{trav-solution}) can be rewritten in the equivalent form 
\begin{equation} \label{NLS-spatial}
-i c \partial_\xi \phi +  \partial_x^2 \phi + 2 |\phi|^2 \phi = \sigma \phi, \quad \phi(\xi,x) : \R \times \Gamma_0 \to \mathbb{C},
\end{equation}
where $\phi(\xi,\cdot)$ consists of the component $\phi_0(\xi,\cdot)$ defined on $[0,L_1]$ and 
the components $\phi_{\pm}(\xi,\cdot)$ defined on $[L_1,2\pi]$ subject to the boundary conditions:
\begin{equation}
\begin{cases} \phi_{0}(\xi,L_1) = \phi_{+}(\xi,L_1) = \phi_{-}(\xi,L_1),  \\
\partial_x \phi_{0}(\xi,L_1) = \partial_x \phi_{+}(\xi,L_1) + \partial_x \phi_{-}(\xi,L_1),  \end{cases} \label{vertex-3}
\end{equation}
and 
\begin{equation}
\begin{cases} \phi_{+}(\xi,2\pi) = \phi_{-}(\xi,2\pi) = \phi_{0}(\xi+2\pi,0),  \\
\partial_x \phi_{+}(\xi,2\pi) + \partial_x \phi_{-}(\xi,2\pi) = \partial_x \phi_{0}(\xi+2\pi,0).  \end{cases} \label{vertex-4}
\end{equation}
Boundary conditions (\ref{vertex-3})--(\ref{vertex-4}) follows from 
(\ref{vertex-1})--(\ref{vertex-2}) for the solutions of the form (\ref{trav-solution}).

The system of equations (\ref{NLS-spatial}) is nonlocal because the boundary conditions (\ref{vertex-4}) at the end points $x = 0$ and $x = 2\pi$ are related with the $2\pi$ shift with respect to the variable $\xi \in \R$. In particular, the operator $\partial^2_x$ in $L^2(\Gamma_0)$ equipped with the boundary conditions (\ref{vertex-3}) and (\ref{vertex-4}) cannot be defined independently of $\xi \in \mathbb{R}$.

In order to obtain a local formulation of the spatial dynamical system, we introduce the substitution 
\begin{equation}
\label{phi-tilde-phi}
\phi(\xi,x) = \tilde{\phi}(\xi+x,x), \quad \tilde{\phi}(\xi,x) : \R\times \Gamma_0 \to \C.
\end{equation}
The NLS equation (\ref{NLS-spatial}) with the substitution (\ref{phi-tilde-phi}) is rewritten in the equivalent form:
\begin{equation}
\label{NLS-potential}
- i c \partial_{\xi} \tilde{\phi} +  (\partial_{\xi} + \partial_x)^2 \tilde{\phi} + 2 |\tilde{\phi}|^2 \tilde{\phi}  = \sigma \tilde{\phi},
\end{equation}
which yields the spatial dynamical system in the form:
\begin{equation}
\label{NLS-dynamics}
\frac{d}{d\xi} \begin{pmatrix} \tilde{\phi} \\ \tilde{\eta} \end{pmatrix} = 
\begin{pmatrix} \tilde{\eta} \\
-\partial_x^2 \tilde{\phi} - 2 \partial_x \tilde{\eta} 
+ i c \tilde{\eta} + \sigma \tilde{\phi} - 2 |\tilde{\phi}|^2 \tilde{\phi} \end{pmatrix},
\end{equation}
where $\tilde{\eta}(\xi,x) := \partial_{\xi} \tilde{\phi}(\xi,x)$. The boundary conditions (\ref{vertex-3}) and (\ref{vertex-4}) with the substitution (\ref{phi-tilde-phi}) are rewritten in the following form:
\begin{equation}
\begin{cases} \tilde{\phi}_{0}(\xi,L_1) = \tilde{\phi}_{+}(\xi,L_1) = \tilde{\phi}_{-}(\xi,L_1),  \\
\tilde{\eta}_{0}(\xi,L_1) = \tilde{\eta}_{+}(\xi,L_1) = \tilde{\eta}_{-}(\xi,L_1),  \\ 
    	\partial_x \tilde{\phi}_{0}(\xi,L_1) + \tilde{\eta}_{0}(\xi,L_1) =\partial_x \tilde{\phi}_{+}(\xi,L_1) + \tilde{\eta}_{+}(\xi,L_1) + \partial_x \tilde{\phi}_{-}(\xi,L_1) + \tilde{\eta}_{-}(\xi,L_1), 
        \end{cases} \label{vertex-5}
	\end{equation}
and
\begin{equation}
\begin{cases} \tilde{\phi}_{+}(\xi,2\pi) = \tilde{\phi}_{-}(\xi,2\pi) = \tilde{\phi}_{0}(\xi,0),  \\
\tilde{\eta}_{+}(\xi,2\pi) = \tilde{\eta}_{-}(\xi,2\pi) = \tilde{\eta}_{0}(\xi,0),  \\
\partial_x \tilde{\phi}_{+}(\xi,2\pi) + \tilde{\eta}_{+}(\xi,2\pi) + 
\partial_x \tilde{\phi}_{-}(\xi,2\pi) + \tilde{\eta}_{-}(\xi,2\pi) = 
\partial_x \tilde{\phi}_{0}(\xi,0) + \tilde{\eta}_{0}(\xi,0),  \end{cases} \label{vertex-6}
\end{equation}
where we have translated $\xi$ by $2\pi$ in (\ref{vertex-6}). Compared to (\ref{vertex-4}), 
the boundary conditions (\ref{vertex-6}) are defined independently of $\xi \in \mathbb{R}$.

The operators $\partial_x^2$ and $i \partial_x$ in (\ref{NLS-dynamics}) act on $\tilde{\phi}$ and $\tilde{\eta}$ respectively, which satisfy the boundary  conditions (\ref{vertex-5}) and (\ref{vertex-6}). We define 
$$
H^1_{\rm C} := \left\{ (\tilde{\eta}_0,\tilde{\eta}_+,\tilde{\eta}_-) \in 
H^1([0,L_1]) \times H^1([L_1,2\pi]) \times H^1([L_1,2\pi]) \right\} \subset L^2(\Gamma_0)
$$
subject to the boundary conditions 
\begin{equation}
\label{bc-eta}
\tilde{\eta}_0(L_1) = \tilde{\eta}_+(L_1) = \tilde{\eta}_-(L_1), \qquad 
\tilde{\eta}_+(2\pi) = \tilde{\eta}_-(2\pi) = \tilde{\eta}_0(0).
\end{equation}
We define $H^2_{\rm NK} $ as
$$
H^2_{\rm NK} := \left\{ (\tilde{\phi}_0,\tilde{\phi}_+,\tilde{\phi}_-) \in 
H^2([0,L_1]) \times H^2([L_1,2\pi]) \times H^2([L_1,2\pi]) \right\} \subset H^1_C
$$ 
subject to the continuity conditions of $H^1_{\rm C}$ at the vertex points for $(\tilde{\phi}_0,\tilde{\phi}_+,\tilde{\phi}_-)$ and the following two conditions:
\begin{align}
\label{bc-phi-1}
\partial_x \tilde{\phi}_{0}(L_1) - \tilde{\eta}_0(L_1) &= \partial_x \tilde{\phi}_{+}(L_1) + \partial_x \tilde{\phi}_{-}(L_1), \\
\label{bc-phi-2}
\partial_x \tilde{\phi}_{+}(2\pi) + 
\partial_x \tilde{\phi}_{-}(2\pi) &= 
\partial_x \tilde{\phi}_{0}(0) - \tilde{\eta}_{0}(0),
\end{align}
where $(\tilde{\eta}_0,\tilde{\eta}_+,\tilde{\eta}_-) \in H^1_{\rm C}$.
The subspaces $H^2_{\rm NK}$, $ H^1_{\rm C}$ 
of $L^2(\Gamma_0)$ are suitable domains for the self-adjoint operators $(\partial_x^2,i\partial_x)$ acting on $(\tilde{\phi},\tilde{\eta})$ in the spatial dynamics formulation (\ref{NLS-dynamics}). Hence, the phase space for the spatial dynamical system (\ref{NLS-dynamics}) is given by 
\begin{equation}
\label{phase-space}
\mathcal{D} := \left\{ (\tilde{\phi},\tilde{\eta}) : \;\; \tilde{\phi} \in H^2_{\rm NK}, \;\; \tilde{\eta} \in H^1_{\rm C} \right\}.
\end{equation}
The mapping $\R \ni \xi \mapsto (\tilde{\phi},\tilde{\eta})(\xi,\cdot) \in \mathcal{D}$ for the solutions of the system (\ref{NLS-dynamics}) defines a curve in the phase space $\mathcal{D}$. The traveling solitary wave with the profile $\tilde{\phi}(\xi,x)$ corresponds to a homoclinic orbit connecting the trivial  equilibrium state $(0,0)$ as $\xi \to \pm \infty$. However, such traveling solitary wave solutions do not generally exist in the spatial dynamical system (\ref{NLS-dynamics}), as we show in Section \ref{sec-3} from the analysis of the linearization at $(0,0)$.

We conclude this section with two remarks. First, if $L_1 = 2\pi$ and $L_2 = 0$, the vertex conditions (\ref{vertex-5}) and (\ref{vertex-6}) are equivalent to the periodicity conditions so that $H^2_{\rm NK} = H^2_{\rm per}$ and $H^1_{\rm C} = H^1_{\rm per}$. As we show in Appendix \ref{app-A}, there exists an exact traveling ${\rm sech}$-soliton solution to the spatial dynamical system (\ref{NLS-dynamics}) in this periodic case. 

Second, the spatial NLS equation (\ref{NLS-potential}) can be formulated variationally as the Euler--Lagrange equation of the following action functional
\begin{equation}
\label{var-form}
\Lambda_{\sigma,c}(\varphi) := \int_{\mathbb{R} \times \Gamma_0} \left(  |(\partial_{\xi} + \partial_x) \varphi|^2 - |\varphi|^4 + \sigma |\varphi|^2 + \frac{i}{2} c (\bar{\varphi} \partial_{\xi} \varphi - \varphi \partial_{\xi} \bar{\varphi}) \right) d\xi dx. 
\end{equation}
However, as we show in Appendix \ref{app-B}, 
this variational formulation is not useful even for $c  = 0$ because the infimum of the quadratic part of $\Lambda_{\sigma,0}$ is not achieved under the constraint of fixed $\int_{\mathbb{R} \times \Gamma_0} |\varphi|^4 d\xi dx$.

Standing wave solutions of the NLS equation (\ref{NLS}) on the periodic graph $\Gamma$ were obtained in \cite{Dovetta} from minimization of the energy $E$ subject to fixed mass $Q$, 
where 
\begin{equation}
\label{standard-var}
E(\varphi) := \int_{\Gamma} \left( |\partial_x \varphi|^2 - |\varphi|^4 \right) dx, \qquad Q(\varphi) := \int_{\Gamma} |\varphi|^2 dx.
\end{equation}
Alternatively, the standing wave solutions can be obtained from minimization of $E(\varphi) + \sigma Q(\varphi)$
on the Nehari manifold (see \cite{Pankov-per}). Unfortunately, 
the variational techniques used to obtain the existence of standing waves with the functionals in (\ref{standard-var}) do not extend to the functional $\Lambda_{\sigma,c}(\varphi)$ given by (\ref{var-form}). 

The reason for the failure of the variational formulation (\ref{var-form}) is clear from the solution form (\ref{trave-solutions-components}) since the limit $c \to 0$ appears to be a singular limit for the traveling wave solutions of the form (\ref{trav-solution}). If $c \neq 0$, we are looking for a smooth solution $(\tilde{\phi},\tilde{\eta}) \in C^1(\mathbb{R},\mathcal{D})$ of the spatial dynamical system (\ref{NLS-dynamics}). However, the solution in the form (\ref{trav-solution}) with $c = 0$ needs not to be smooth and can be expressed in the separable form as $\psi_n(t,x) = \phi_n e^{i \sigma t}$ with $\phi \in \mathbb{C}^{\mathbb{Z}}$ obtained from the discrete map in \cite{PS-2017}. This situation is very similar to the LDEs, where the standing wave solutions satisfy the discrete maps and need not to be smooth, whereas the traveling wave solutions satisfy the differential advance-delay equations and need to be smooth. The limit of vanishing wave speed appears also as the singular limit of the differential advance-delay equations, see \cite{MalletParet} and \cite{P1,P2}.

\section{Linear theory}
\label{sec-3}

The following proposition summarizes the state-of-art on the spectral theory for the spectral problem (\ref{spectral-prob}) with boundary conditions (\ref{vertex-11}) and (\ref{vertex-12}).

\begin{proposition}
	\label{prop-spectrum}
There exists a sequence of eigenvalues $\{ \omega_m(\ell)\}_{m \in \N}$ 
in the spectral problem (\ref{spectral-prob}) with the boundary conditions (\ref{vertex-11}) and (\ref{vertex-12}) such that 
$$
\omega_m(\ell) = \omega_m(\ell+1), \qquad \ell \in \R, \quad m \in \N.
$$ 	
The sequence consists of two subsequencies. The first one at $\{\omega_m^{(1)}(\ell)\}_{m \in \N}$ with 
\begin{equation}
\label{omega-first}
\omega_m^{(1)} := \frac{\pi^2 m^2}{L_2^2}
\end{equation}
corresponds to the flat bands which are independent of $\ell \in \R$. The second one at $\{ \omega_m^{(2)}(\ell)\}_{m \in \N}$ is defined by roots of the characteristic equation:
\begin{equation}
\label{char-eq}
9 \cos(2\pi \sqrt{\omega}) - \cos(2 (\pi - L_2)\sqrt{\omega}) 
- 8\cos(2\pi \ell) = 0,
\end{equation}
where we have used $L_1 + L_2 = 2\pi$.
\end{proposition}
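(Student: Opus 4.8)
The plan is to solve the spectral problem (\ref{spectral-prob}) edge by edge and to close it with the vertex conditions, exploiting the reflection symmetry that exchanges the two semicircles.

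First I would gauge away the shift by setting $f(\ell,x) = e^{-i\ell x} g(x)$, so that $-(\partial_x+i\ell)^2 f = \omega f$ becomes the plain Helmholtz equation $-g'' = \omega g$ on each edge, with $g(x) = \alpha\cos(kx) + \beta\sin(kx)$ and $k = \sqrt\omega$. Because $(\partial_x + i\ell) f = e^{-i\ell x} g'$, the interior conditions (\ref{vertex-11}) at $x = L_1$ reduce to ordinary continuity and Kirchhoff conditions for $g$, whereas the cell-boundary conditions (\ref{vertex-12}) pick up the Bloch factor $\mu := e^{2\pi i\ell}$, becoming $g_\pm(2\pi) = \mu\, g_0(0)$ and $g_+'(2\pi)+g_-'(2\pi) = \mu\, g_0'(0)$. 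The existence of a real, bounded-below, discrete sequence $\{\omega_m(\ell)\}_{m\in\N}$ accumulating at $+\infty$ is the standard statement for the self-adjoint operator $-(\partial_x+i\ell)^2$ with compact resolvent on the compact cell $\Gamma_0$, which I would simply invoke.

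Next I would decompose eigenfunctions with the reflection $R$ swapping $g_+$ and $g_-$, which commutes with the operator and with all vertex conditions, into a symmetric sector ($g_+ = g_-$) and an antisymmetric sector ($g_+ = -g_-$). In the antisymmetric sector, continuity forces $g_\pm(L_1) = g_\pm(2\pi) = 0$ and $g_0(0) = g_0(L_1) = 0$, and the Kirchhoff relations give $g_0'(0) = g_0'(L_1) = 0$; uniqueness for the line-segment ODE then yields $g_0 \equiv 0$, while $g_\pm$ are Dirichlet modes on an edge of length $L_2$. This produces precisely the $\ell$-independent flat bands $\omega_m^{(1)} = \pi^2 m^2/L_2^2$ of (\ref{omega-first}). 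In the symmetric sector I would take $g_0 = A\cos(kx) + B\sin(kx)$ on $[0,L_1]$ and $g_\pm = C\cos(kx) + D\sin(kx)$ on $[L_1,2\pi]$ and impose the four matching conditions (continuity and Kirchhoff with weight $2$ at $x=L_1$, and the two Bloch-twisted conditions at the cell boundary), obtaining a homogeneous linear system $M(\omega,\ell)(A,B,C,D)^{T} = 0$.

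The dispersion relation is then $\det M(\omega,\ell) = 0$, and the single computational step is to evaluate this $4\times 4$ determinant. Writing the matrix in $2\times 2$ blocks with lower-left block $-\mu I$ collapses it to $\det(PS + \mu Q)$ for explicit $2\times2$ matrices $P,Q,S$; I expect this to be quadratic in $\mu$ with equal leading and constant coefficients, of the form $2\mu^2 + \beta\mu + 2 = 0$, so that dividing by $\mu$ gives $4\cos(2\pi\ell) + \beta = 0$. Using $c_1^2 + s_1^2 = 1$ (with $c_1 = \cos kL_1$, $s_1 = \sin kL_1$), the identity $4c_1^2 + 5s_1^2 = \tfrac92 - \tfrac12\cos 2kL_1$, product-to-sum formulas, and the relation $L_1 + L_2 = 2\pi$ (so that $2\pi k = (L_1+L_2)k$ and $2(\pi - L_2) = L_1 - L_2$), I expect $\beta$ to consolidate into $-\tfrac92\cos(2\pi\sqrt\omega) + \tfrac12\cos\!\big(2(\pi-L_2)\sqrt\omega\big)$, which after multiplication by $-2$ is exactly (\ref{char-eq}). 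Finally, since both the flat bands and (\ref{char-eq}) depend on $\ell$ only through the $1$-periodic function $\cos(2\pi\ell)$, the periodicity $\omega_m(\ell) = \omega_m(\ell+1)$ is immediate. I expect the main obstacle to be purely bookkeeping: tracking the Kirchhoff weight $2$ through the determinant and coaxing the asymmetric-looking coefficients into the clean combination with $9$ and $1$ in (\ref{char-eq}).
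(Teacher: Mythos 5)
Your proposal is correct and follows essentially the same route as the paper: the gauge transformation $f = e^{-i\ell x} g$ reproduces the paper's passage to the original Bloch variable $w$, the symmetric/antisymmetric decomposition is exactly the paper's reductions giving the Dirichlet flat bands $\pi^2 m^2/L_2^2$ and the dispersive bands, and your palindromic quadratic $2\mu^2 + \beta\mu + 2 = 0$ is precisely the Floquet condition $\mu^2 - \mu \, \tr M(\omega) + 1 = 0$ for the paper's $2\times 2$ monodromy matrix (with $\det M = 1$), your claimed $\beta$ matching $\tr M(\omega) = \tfrac94\cos(2\pi\sqrt{\omega}) - \tfrac14\cos(2(\pi-L_2)\sqrt{\omega})$ and hence the characteristic equation (\ref{char-eq}). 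The only cosmetic difference is that you evaluate a $4\times 4$ determinant in the cell where the paper propagates coefficients cell-to-cell via the transfer matrix, which are equivalent formulations of the same computation.
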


\begin{proof}
It is more efficient computationally (see, e.g., \cite{PS-2017}) to obtain the two subsequencies of  eigenvalues $\{ \omega_m(\ell)\}_{m \in \N}$ in the original variable $w(x)$ satisfying the system of differential equations 
\begin{equation*}
w''(x) + \omega w(x) = 0, \qquad x \in \Gamma.
\end{equation*}

For the first subsequence of eigenvalues, we have the reduction
\begin{equation}
\label{reduction-1}
\begin{cases}
w_{n,0}(x) = 0, \quad & x \in [2\pi n, 2\pi n + L_1], \\
w_{n,+}(x) = -w_{n,-}(x), \quad & x \in [2\pi n + L_1, 2\pi (n+1)],
\end{cases}
\quad \quad n \in \mathbb{Z}.
\end{equation}
The solution of $w''(x) + \omega w(x) = 0$ satisfying the boundary conditions (\ref{vertex-1}) and (\ref{vertex-2}) is given by 
$$
w_{n,\pm}(x) = \pm a_n \sin \sqrt{\omega} (x-2\pi n - L_1)
$$ 
with arbitrary $\{a_n\}_{n \in \Z}$ and uniquely defined $\omega$ in the set $\{ \frac{\pi^2 m^2}{L_2^2}\}_{m \in \mathbb{N}}$. This yields the first subsequence of eigenvalues (\ref{omega-first}) for the flat ($\ell$-independent) bands. 

For the second subsequence of eigenvalues, we have the reduction
\begin{equation}
\label{reduction-2}
w_{n,+}(x) = w_{n,-}(x), \quad x \in [2\pi n + L_1, 2\pi (n+1)],
\quad \quad n \in \mathbb{Z}
\end{equation}
with the corresponding solution 
\begin{equation*}
\begin{cases}
w_{n,0}(x) = a_n \cos \sqrt{\omega}(x-2\pi n) + b_n \sin \sqrt{\omega}(x-2\pi n), & x \in [2\pi n, 2\pi n + L_1], \\
w_{n,+}(x) = c_n \cos \sqrt{\omega}(x-2\pi n -L_1) + d_n \sin \sqrt{\omega}(x-2\pi n - L_1), & x \in [2\pi n + L_1, 2\pi (n+1)].
\end{cases}
\end{equation*}
The boundary conditions define the monodromy matrix 
$$
\begin{pmatrix} a_{n+1} \\ b_{n+1} \end{pmatrix} 
= M(\omega) \begin{pmatrix} a_{n} \\ b_{n} \end{pmatrix}, 
$$
where
\begin{equation}
\label{matrix-M}
M(\omega) = \begin{pmatrix} \cos \sqrt{\omega} L_2 & \sin \sqrt{\omega} L_2 \\ -2 \sin \sqrt{\omega} L_2 & 2 \cos \sqrt{\omega} L_2 \end{pmatrix}
\begin{pmatrix} \cos \sqrt{\omega} L_1 & \sin \sqrt{\omega} L_1 \\ -0.5 \sin \sqrt{\omega} L_1 & 0.5 \cos \sqrt{\omega} L_1 \end{pmatrix}.
\end{equation}
Since $\det M(\omega) = 1$, eigenvalues $\mu$ of $M(\omega)$ (called Floquet multipliers) are found from roots of the quadratic equations:
$$
\mu^2 - \mu \; {\rm tr} M(\omega) + 1 = 0, 
$$ 
where
$$
{\rm tr} M(\omega) = 2 \cos \sqrt{\omega} L_2 \cos \sqrt{\omega} L_1 - \frac{5}{2} \sin \sqrt{\omega} L_2 \sin \sqrt{\omega} L_1.
$$
By using $\mu = e^{2\pi i \ell}$, where $\ell \in \R$ is the Bloch wave number in the wave function representation $w(x) = e^{i \ell x} f(\ell,x)$,  we find the second subsequence of eigenvalues from roots of the characteristic equation (\ref{char-eq}).
\end{proof}

\begin{example}
	\label{example-spectrum}
	Two particular cases of the spectral theory in Proposition \ref{prop-spectrum} are given by 
\begin{itemize}
	\item In the symmetric case $L_1 = L_2 = \pi$, we obtain from (\ref{char-eq}):
\begin{equation}
\label{char-eq-sym}
9 \cos(2 \pi \sqrt{\omega}) - 1 - 8 \cos(2\pi \ell) = 0.
\end{equation}
\item In the homogeneous case $L_1 = 2\pi$, $L_2 = 0$, we obtain from (\ref{char-eq}):
\begin{equation}
\label{char-eq-hom}
 \cos(2\pi \sqrt{\omega}) - \cos(2\pi \ell) = 0.
\end{equation}
The sequence of eigenvalues $\{ \omega_m^{(2)}(\ell) \}_{m \in \N}$ can be equivalently represented as $\{ \omega_k^{(2)}(\ell) \}_{k \in \Z}$ with 
\begin{equation}
\label{seq-eig}
\omega_k^{(2)}(\ell) := (k+\ell)^2, \quad 
\ell \in \mathbb{B}, \quad k \in \Z,
\end{equation}
where $\mathbb{B} = [-\frac{1}{2},\frac{1}{2})$. At the same time, $\omega_m^{(1)} \to \infty$ as $L_2 \to 0$, hence the sequence of flat bands $\{ \omega_m^{(1)}(\ell) \}_{m \in \N}$ is not relevant for the homogeneous case. 
\end{itemize}
\end{example}

Adopting the spectral theory for the operator $-\partial_x^2$ on $L^2(\Gamma)$, 
we can obtain solutions of the linearization of the spatial dynamical system (\ref{NLS-dynamics}) at the trivial equilibrium  $(0,0)$. A perturbation $(\varphi,\zeta)$ of $(0,0)$ in variables $(\tilde{\phi},\tilde{\eta})$ satisfies the linearized system
\begin{equation}
\label{NLS-linear}
\frac{d}{d\xi} \begin{pmatrix} \varphi \\ \zeta \end{pmatrix} = 
\begin{pmatrix} \zeta \\
-\partial_x^2 \varphi - 2 \partial_x \zeta 
+ i c \zeta + \sigma \varphi \end{pmatrix}.
\end{equation}
The following lemma gives eigenvalues of the linearized system (\ref{NLS-linear}). 

\begin{lemma}
	\label{lem-linearized}
Solutions of the linearized system (\ref{NLS-linear}) can be written in the separated form
	$$
	\varphi(\xi,x) = e^{\lambda \xi} \hat{\varphi}(x), \quad 
	\zeta(\xi,x) = e^{\lambda \xi} \hat{\zeta}(x)
	$$
with $(\hat{\varphi},\hat{\zeta}) \in \mathcal{D}$, where $\mathcal{D}$ is given by (\ref{phase-space}) and $\lambda$ are roots of the characteristic equations
\begin{equation}
\label{char-eq-omega}
\sigma + i c \lambda + \omega_m(-i \lambda) = 0, \quad m \in \mathbb{N}, 
\end{equation}
with $\omega_m(\ell)$, $m \in \N$ being extended in $\ell \in \mathbb{C}$ from roots of the characteristic equations (\ref{char-eq}).
\end{lemma}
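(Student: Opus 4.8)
The plan is to insert the separated ansatz $\varphi(\xi,x) = e^{\lambda\xi}\hat\varphi(x)$, $\zeta(\xi,x) = e^{\lambda\xi}\hat\zeta(x)$ directly into the linearized system \eqref{NLS-linear} and reduce it to the spectral problem \eqref{spectral-prob}. The first row of \eqref{NLS-linear} immediately forces $\hat\zeta = \lambda\hat\varphi$, while the second row yields $\lambda\hat\zeta = -\partial_x^2\hat\varphi - 2\partial_x\hat\zeta + ic\hat\zeta + \sigma\hat\varphi$. Eliminating $\hat\zeta$ produces, on each edge of $\Gamma_0$, the second-order profile equation $-(\partial_x + \lambda)^2\hat\varphi + (\sigma + ic\lambda)\hat\varphi = 0$. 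I would then introduce the (complex) Bloch parameter $\ell := -i\lambda$, so that $\partial_x + \lambda = \partial_x + i\ell$ and the profile equation becomes exactly $-(\partial_x + i\ell)^2\hat\varphi = \omega\hat\varphi$ with $\omega := -(\sigma + ic\lambda)$. This is precisely \eqref{spectral-prob}, now read on the basic cell with complex $\ell$; since the coefficients in \eqref{char-eq} are entire functions of $\sqrt\omega$, the eigenvalue branches $\omega_m(\ell)$ extend analytically to $\ell \in \C$, as the statement anticipates.

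The main work, and the step I expect to be the only genuine obstacle, is to verify that membership in the phase space $\mathcal{D}$ from \eqref{phase-space} translates precisely into the Neumann--Kirchhoff vertex conditions \eqref{vertex-11}--\eqref{vertex-12} for $\hat\varphi$. Substituting $\hat\zeta = \lambda\hat\varphi$, the continuity built into $H^1_{\rm C}$ and $H^2_{\rm NK}$ gives continuity of $\hat\varphi$ at both vertices, matching the first lines of \eqref{vertex-11} and \eqref{vertex-12}; continuity of $\hat\zeta = \lambda\hat\varphi$ is then automatic. The derivative conditions \eqref{bc-phi-1} and \eqref{bc-phi-2} carry an apparent sign mismatch, since they produce $\partial_x\hat\varphi_0 - \lambda\hat\varphi_0$ whereas \eqref{vertex-11}--\eqref{vertex-12} contain $(\partial_x + i\ell)\hat\varphi_0 = \partial_x\hat\varphi_0 + \lambda\hat\varphi_0$. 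I would resolve this by expanding each term $(\partial_x + i\ell)\hat\varphi_{0,\pm}$ in the spectral conditions, using continuity of $\hat\varphi$ at the vertex to set $\hat\varphi_+ = \hat\varphi_- = \hat\varphi_0$ there, and observing that the two matched $+\lambda\hat\varphi$ contributions from the $\pm$ edges combine into $2\lambda\hat\varphi_0$; moving this term across the equality flips the sign of the $\lambda\hat\varphi_0$ on the other side, reproducing \eqref{bc-phi-1} and \eqref{bc-phi-2} exactly. Hence $(\hat\varphi,\lambda\hat\varphi) \in \mathcal{D}$ holds if and only if $\hat\varphi$ solves \eqref{spectral-prob} with the vertex conditions \eqref{vertex-11}--\eqref{vertex-12}, and this equivalence is uniform in $\lambda$, including the value $\lambda = 0$ (then $\ell = 0$ and $\hat\zeta = 0 \in H^1_{\rm C}$ trivially).

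Finally I would conclude that a nontrivial separated solution exists if and only if $\omega = -(\sigma + ic\lambda)$ is an eigenvalue $\omega_m(\ell)$ of \eqref{spectral-prob} for some $m \in \N$, that is, if and only if $\omega_m(-i\lambda) = -(\sigma + ic\lambda)$, which is the characteristic equation \eqref{char-eq-omega}. Both families from Proposition \ref{prop-spectrum} enter here, namely the flat bands $\omega_m^{(1)}$ of \eqref{omega-first} and the dispersive bands $\omega_m^{(2)}$ determined by \eqref{char-eq}, so the index $m$ ranges over the full sequence $\{\omega_m(\ell)\}_{m \in \N}$. The associated profile $\hat\varphi = f_m(-i\lambda,\cdot)$ lies in $H^2_{\rm NK}$ and furnishes the desired separated solution with $\hat\zeta = \lambda\hat\varphi \in H^1_{\rm C}$, completing the characterization.
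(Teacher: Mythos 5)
Your proposal is correct and takes essentially the same route as the paper: the substitution $\hat{\zeta} = \lambda \hat{\varphi}$ reduces \eqref{NLS-linear} to the quadratic-pencil spectral problem $-(\partial_x + \lambda)^2 \hat{\varphi} = -(\sigma + i c \lambda)\hat{\varphi}$, and the identification $\ell = -i\lambda$ converts the phase-space vertex conditions into the Neumann--Kirchhoff conditions \eqref{vertex-11}--\eqref{vertex-12}, yielding \eqref{char-eq-omega}. Your careful sign bookkeeping showing that the $2\lambda\hat{\varphi}$ contribution from the two half-circle edges turns $(\partial_x + i\ell)$ into the $\partial_x \hat{\varphi}_0 - \lambda \hat{\varphi}_0$ appearing in \eqref{bc-phi-1}--\eqref{bc-phi-2} simply makes explicit what the paper compresses into the phrase ``by comparing \eqref{vertex-5}--\eqref{vertex-6} with \eqref{vertex-11}--\eqref{vertex-12}.''
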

\begin{proof}
Using $\hat{\zeta} = \lambda \hat{\varphi}$ from the first equation in system (\ref{NLS-linear}), we rewrite the second equation in system (\ref{NLS-linear}) as the spectral problem:
\begin{equation}
\label{NLS-spectral} 
\sigma \hat{\varphi}  + i c \lambda \hat{\varphi} - (\partial_x + \lambda)^2 \hat{\varphi} = 0.
\end{equation}
By comparing (\ref{vertex-5})--(\ref{vertex-6}) with (\ref{vertex-11})--(\ref{vertex-12}) and using the Bloch eigenfunctions 
$f(\ell,x)$ with properties in (\ref{Bloch-wave}) and (\ref{spectral-prob}), we find the correspondence:
\begin{equation}
\label{eigenfunction}
\lambda = i \ell, \qquad \hat{\varphi} = f_m(-i\lambda,x), 
\end{equation}
which reduces the spectral problem (\ref{NLS-spectral}) to the characteristic equations (\ref{char-eq-omega}).
\end{proof}

\subsection{Bifurcation of double imaginary eigenvalues}

Among eigenvalues $\lambda$ of Lemma \ref{prop-spectrum}, we will be particularly interested in the double eigenvalues on $i \R$, which split away from $i \mathbb{R}$, when a point $(\sigma,c)$ deviates from the bifurcation point $(\sigma_0,c_0)$. 

\begin{definition}
We say that $(\sigma_0,c_0)$ is the bifurcation point if one of the characteristic equations (\ref{char-eq-omega}) admits a double eigenvalue $\lambda_0 = i \ell_0$ for some $\ell_0 \in \mathbb{B} = [-\frac{1}{2},\frac{1}{2})$ and $m_0 \in \mathbb{N}$. 
\end{definition}

The following lemma relates $(\sigma_0,c_0)$ to $\omega_{m_0}(\ell_0)$ and its derivative and gives the criterion that 
the double eigenvalue splits transversely to $i\R$ when $\sigma \neq \sigma_0$.

\begin{lemma}
	\label{prop-expansion}
	Let $(\sigma_0,c_0)$ be the bifurcation point for some $\ell_0 \in \mathbb{B}$ and $m_0 \in \mathbb{N}$. Then, we have
	\begin{align}
	\label{bif-point}
	\begin{cases} 
	\sigma_0 = -\omega_{m_0}(\ell_0) + \ell_0 \omega_{m_0}'(\ell_0), \\
	c_0 = \omega_{m_0}'(\ell_0). \end{cases}
	\end{align}
	Assume that $\omega_{m_0}''(\ell_0) \neq 0$. Then, there are $\varepsilon > 0$ 
	and $C > 0$ such that there exist two simple roots $\lambda_{1,2} \in \mathbb{C}$ of the characteristic equation (\ref{char-eq-omega}) with $m = m_0$ and ${\rm Re}(\lambda_1) > 0 > {\rm Re}(\lambda_2)$ such that 
	\begin{equation}
	\label{bifurcation}
	|\lambda_{1,2} - i \ell_0| \leq C \sqrt{|\sigma - \sigma_0|}, \qquad 
	{\rm sgn}(\omega_{m_0}''(\ell_0)) (\sigma - \sigma_0) \in (0,\varepsilon).
	\end{equation}
\end{lemma}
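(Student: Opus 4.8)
\emph{Proof proposal.} The plan is to read the characteristic equation (\ref{char-eq-omega}) with $m=m_0$ as an analytic equation in $\lambda$ depending on the parameters $(\sigma,c)$, and to extract the bifurcation conditions from the requirement of a double root at $\lambda_0=i\ell_0$. I introduce
$$
F(\lambda;\sigma,c) := \sigma + i c \lambda + \omega_{m_0}(-i\lambda),
$$
which is analytic in $\lambda$ near $\lambda_0$ as long as $\omega_{m_0}$ is a single-valued analytic branch of the dispersion relation (\ref{char-eq}) in a neighborhood of $\ell_0$, away from band crossings. A double root of $F$ at $\lambda_0$ means $F(\lambda_0;\sigma_0,c_0)=0$ and $\partial_\lambda F(\lambda_0;\sigma_0,c_0)=0$. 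Since $\partial_\lambda F = ic - i\,\omega_{m_0}'(-i\lambda)$, the vanishing of the derivative at $\lambda_0=i\ell_0$ (where $-i\lambda_0=\ell_0$) forces $c_0=\omega_{m_0}'(\ell_0)$, while $F(\lambda_0;\sigma_0,c_0)=\sigma_0-c_0\ell_0+\omega_{m_0}(\ell_0)=0$ gives $\sigma_0=\ell_0\omega_{m_0}'(\ell_0)-\omega_{m_0}(\ell_0)$. This is precisely (\ref{bif-point}).

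For the splitting I would fix $c=c_0$, vary $\sigma$, and Taylor-expand $F$ about $\lambda_0$. Two observations make this transparent: $\partial_\sigma F\equiv 1$, so $F(\lambda_0;\sigma,c_0)=\sigma-\sigma_0$ depends affinely on $\sigma$; and $\partial_\lambda^2 F(\lambda_0;\sigma,c_0)=-\omega_{m_0}''(\ell_0)$ is independent of $\sigma$ and nonzero by assumption. Hence
$$
F(\lambda;\sigma,c_0) = (\sigma-\sigma_0) - \tfrac{1}{2}\omega_{m_0}''(\ell_0)(\lambda-\lambda_0)^2 + O\big((\lambda-\lambda_0)^3\big).
$$
Because $\partial_\lambda^2 F(\lambda_0;\sigma_0,c_0)\neq 0$, the Weierstrass preparation theorem factors $F$ near $(\lambda_0,\sigma_0)$ as $\big[(\lambda-\lambda_0)^2 + a_1(\sigma)(\lambda-\lambda_0) + a_0(\sigma)\big]\,U(\lambda,\sigma)$ with $U$ analytic and nonvanishing and $a_0(\sigma_0)=a_1(\sigma_0)=0$. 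Evaluating the factorization at $\lambda=\lambda_0$ gives $a_0(\sigma)=(\sigma-\sigma_0)/U(\lambda_0,\sigma)$, and comparing second $\lambda$-derivatives at $\sigma_0$ yields $U(\lambda_0,\sigma_0)=-\tfrac12\omega_{m_0}''(\ell_0)$, so that $a_0'(\sigma_0)=-2/\omega_{m_0}''(\ell_0)$ while $a_1=O(\sigma-\sigma_0)$.

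The two roots of the quadratic factor are then
$$
\lambda_{1,2} - \lambda_0 = \tfrac{1}{2}\Big(-a_1 \pm \sqrt{a_1^2 - 4a_0}\Big) = \pm\sqrt{\frac{2(\sigma-\sigma_0)}{\omega_{m_0}''(\ell_0)}} + O(\sigma-\sigma_0),
$$
since $a_1^2=O((\sigma-\sigma_0)^2)$ is subdominant to $a_0=O(\sigma-\sigma_0)$; this gives the bound $|\lambda_{1,2}-i\ell_0|\leq C\sqrt{|\sigma-\sigma_0|}$. When ${\rm sgn}(\omega_{m_0}''(\ell_0))(\sigma-\sigma_0)\in(0,\varepsilon)$ the radicand is positive, so the leading term is real of order $\sqrt{|\sigma-\sigma_0|}$, and for $\varepsilon$ small the $O(\sigma-\sigma_0)$ correction is of strictly smaller order and cannot flip its sign; the discriminant $\approx -4a_0\neq 0$ shows the roots are simple. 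Hence ${\rm Re}(\lambda_1)>0>{\rm Re}(\lambda_2)$, as claimed.

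The algebraic derivation of (\ref{bif-point}) is routine, and the expansion is standard once the factorization is in place. I expect the genuine obstacles to be twofold: first, justifying that $\omega_{m_0}$ extends to a single-valued analytic (at least $C^2$) branch on a complex neighborhood of $\ell_0$, i.e.\ that $\ell_0$ is not a branch point of (\ref{char-eq}), since the entire Taylor/Weierstrass argument rests on this; and second, controlling the $O(\sigma-\sigma_0)$ remainder in the root expansion carefully enough to guarantee it stays subdominant to the $\sqrt{|\sigma-\sigma_0|}$ leading term, so that the sign of ${\rm Re}(\lambda_{1,2})$ is truly determined rather than left ambiguous near the imaginary axis.
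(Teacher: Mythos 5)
Your proposal is correct and follows essentially the same route as the paper: the double-root conditions $F(\lambda_0)=\partial_\lambda F(\lambda_0)=0$ at $\lambda_0 = i\ell_0$ give \eqref{bif-point}, and the Taylor expansion $F = (\sigma-\sigma_0) - \tfrac12\omega_{m_0}''(\ell_0)(\lambda-i\ell_0)^2 + \mathcal{O}\bigl((\lambda-i\ell_0)^3\bigr)$ at fixed $c = c_0$ yields the splitting \eqref{bifurcation}, with your Weierstrass-preparation step simply making rigorous what the paper asserts in one line. The only ingredient of the paper's proof missing from yours is the preliminary observation that for the flat bands $\omega_m^{(1)}$ the characteristic equation \eqref{char-eq-omega} is affine in $\lambda$ and so admits no double roots, forcing $m_0$ to correspond to a non-flat band $\omega_{m_0}^{(2)}$ --- a one-sentence point that also supports the analyticity of the branch $\omega_{m_0}(\ell)$ near $\ell_0$ which you rightly flagged as the hypothesis underlying the whole expansion.
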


\begin{proof}
	For every flat band $\omega_m^{(1)} = \frac{\pi^2 m^2}{L_2^2}$, the characteristic equation (\ref{char-eq-omega}) is linear in $\lambda$ and admits only one simple root on $i \R$. Hence, we consider the non-flat band $\omega_m^{(2)}$ of Proposition \ref{prop-spectrum}. From the assumption that $\lambda = i \ell_0$ is a double root of (\ref{char-eq-omega}) at $(\sigma_0,c_0)$, we get 
	\begin{align*}
	\begin{cases} 
	\sigma_0 - c_0 \ell_0 + \omega_{m_0}(\ell_0) = 0, \\
	c_0 - \omega_{m_0}'(\ell_0) = 0, \end{cases}
	\end{align*}
	which yields (\ref{bif-point}). If $\omega_{m_0}''(\ell_0) \neq 0$, $\lambda_0 = i \ell_0$ is exactly the double root of (\ref{char-eq-omega}) for $m = m_0$. Expanding (\ref{char-eq-omega}) for $m = m_0$ in powers of $\lambda - i \ell_0$ and $\sigma - \sigma_0$ for $c = c_0$ yields:
	$$
	\sigma - \sigma_0 - \frac{1}{2} \omega_{m_0}''(\ell_0) (\lambda - i \ell_0)^2 + \mathcal{O}((\lambda - i \ell_0)^3) = 0,
	$$
	which yields (\ref{bifurcation}) for small and nonzero $|\sigma - \sigma_0|$. 
\end{proof}

\begin{example}
	\label{example-expansion}
	In the homogeneous case of $L_1 = 2 \pi$ and $L_2 = 0$, 
	the bands are given by (\ref{seq-eig}). For simplicity, let us assume that $c_0 \in (-1,1)$ so that $\ell_0 := \frac{c_0}{2} \in \mathbb{B}$ and the bifurcation happens at the lowest band $\omega_0(\ell) = \ell^2$ for $k = 0$. Then, we get the quadratic equation
	\begin{equation}
	\label{lambda-expansion}
	\sigma + i c \lambda - \lambda^2= 0,
	\end{equation}
	which admit two roots at 
	$$
	\lambda_{1,2} = \frac{i c}{2} \pm \frac{\sqrt{4 \sigma - c^2}}{2}.
	$$
	The roots are double at $\sigma = \sigma_0 = \frac{c_0^2}{4}$ for any $c = c_0 \in (-1,1)$ and satisfy (\ref{bifurcation}) for every $\sigma > \sigma_0$ since $\omega_0''(\ell_0) = 2 > 0$. Note that $c_0 = 2 \ell_0$ and $\sigma_0 = -\ell_0^2 + 2 \ell_0^2 = \ell_0^2$ are in agreement with (\ref{bif-point}).
\end{example}

\subsection{Numerical approximation of eigenvalues}

For each $m \in \mathbb{N}$, the roots of the characteristic equation (\ref{char-eq-omega}) with $\omega_m$ defined from the trace equation \eqref{char-eq} can be either obtained explicitly in the homogeneous case $L_1 = 2\pi$ and $L_2 = 0$, or obtained numerically. 

In the homogeneous case $L_1=2\pi$, $L_2=0$, if we use (\ref{seq-eig}) to re-enumerate the sequence of spectral bands, then 
the eigenvalues $\lambda$ can be computed from (\ref{char-eq-omega}) explicitly as 
\begin{equation}
\label{char-quartic}
\lambda = i \left(\frac{c}{2} - k\right) \pm \sqrt{\sigma - \frac{c^2}{4} + c k}, \quad k \in \Z.
\end{equation}
The obtained eigenvalues are shown on Figure \ref{fig:spectral-picture-L1_0} for $c = 0.5$ and $\sigma = 0.25 c^2 + \varepsilon$ 
with $\varepsilon = 0.1$. The pair of eigenvalues for $k = 0$ is shown by blue dots. These are the bifurcating eigenvalues in Example \ref{example-expansion}. All other roots are either on the purely imaginary axis in the upper half-plane for $k \geq 1$ or as symmetric complex pairs in the lower-half plane for $k \leq -1$.

\begin{figure}[htb!]
	\centering
	\includegraphics[width=12cm, height = 8.6cm]{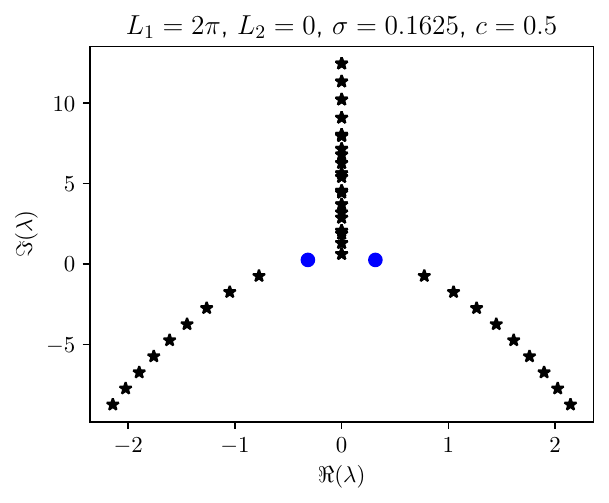}
	\caption{Eigenvalues $\lambda$ in the homogeneous case $L_1 = 2\pi$ and $L_2 = 0$. Blue dots show the pair of eigenvalues bifurcating from the origin for $k = 0$ and $\sigma > \frac{c^2}{4}$.}
	\label{fig:spectral-picture-L1_0}
\end{figure}

Several options are possible to obtain the roots numerically for $L_1 \neq 2\pi$. One may calculate the zero level curves for the real and imaginary parts, and find the eigenvalues at the intersection points. Alternatively, one may proceed by continuation from the limiting homogeneous case. In the latter case, the procedure is the following. 

First, given $K\in\mathbb N$, we compute a set of eigenvalues $(\lambda_k^0)_{k=1,\dots,K}$ for the homogeneous case $L_1 = 2\pi$ and $L_2 = 0$. Given $N\in\mathbb N$ and $L_1\in(0,2\pi)$, $L_2=2\pi-L_1$, we divide  the intervals $[L_1,2\pi]$ and $[0,L_2]$ in $N$ parts by setting $L_1^n=2\pi-n(2\pi-L_1)/N$, $L_2=nL_2/N$ for $n=0,\dots,N$. Then we compute iteratively the eigenvalues $(\lambda_k^n)_{k=1,\dots,K}$ corresponding to $(L_1^n,L_2^n)$ by solving at each $n\geq 1$ the eigenvalue equation \eqref{char-eq-omega} using the software root solver (typically \texttt{fsolve} in Matlab or Python), where the initial guess for the eigenvalues is obtained at the step $n-1$. Choosing $N$ large enough ensures the ability of the algorithm to follow the eigenvalues. Whenever two roots collide on the imaginary axis, the initial guess is given by eigenvalues perturbed off the imaginary axis on the negative and positive sides. 

The outcome of the two numerical approaches are in perfect agreement. They are represented on Figure \ref{fig:roots-lines-and-stars}. Compared to Figure \ref{fig:spectral-picture-L1_0},  some roots in the upper half-plane are also complex-valued (symmetrically about the imaginary axis).

\begin{figure}[htb!]
	\centering
	\includegraphics[width=12cm, height = 8.6cm]{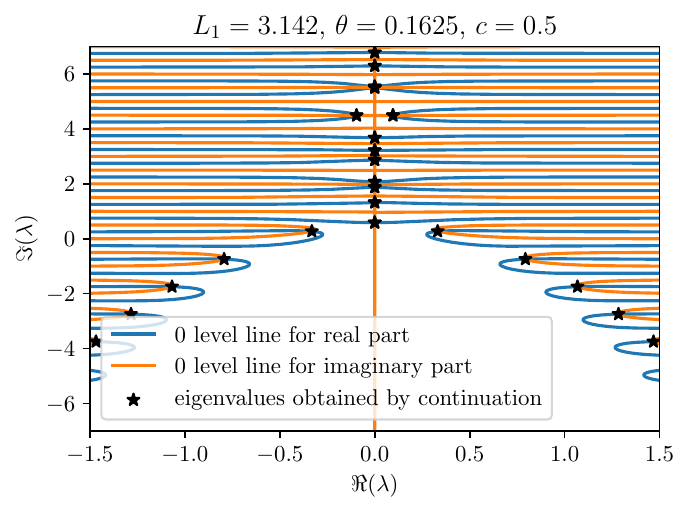}
	\caption{Numerical calculations of eigenvalues obtained from the characteristic equations \eqref{char-eq-omega} for $L_1 = L_2 = \pi$ by the two different numerical approaches.}
	\label{fig:roots-lines-and-stars}
\end{figure}

\begin{figure}[htb!]
    \centering
    \includegraphics[width=0.49\linewidth,height=6cm]{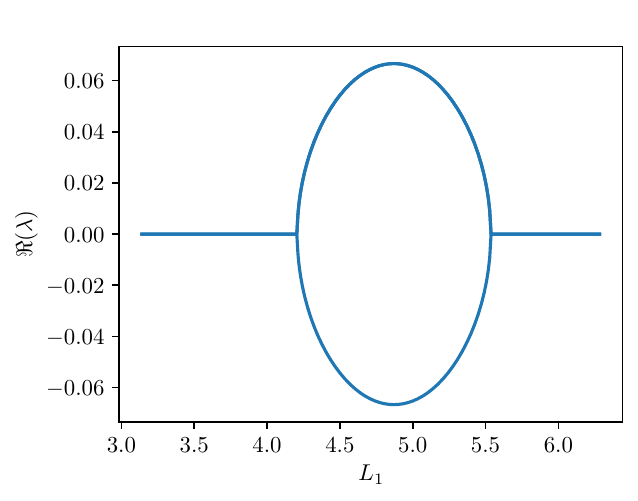}
    ~\includegraphics[width=0.49\linewidth,height=6cm]{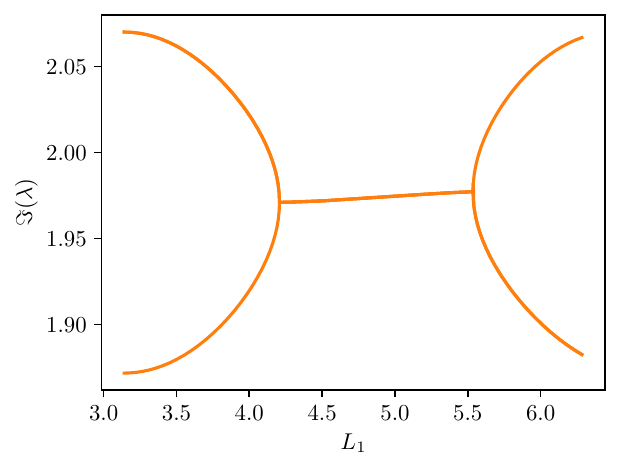}\\
    \includegraphics[width=0.49\linewidth,height=6cm]{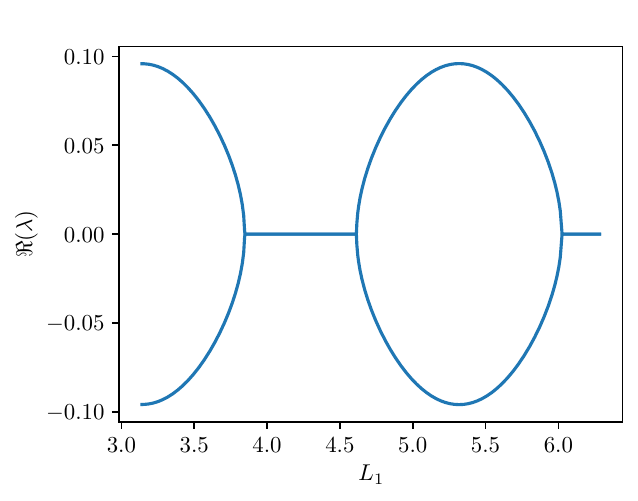}
    ~\includegraphics[width=0.49\linewidth,height=6cm]{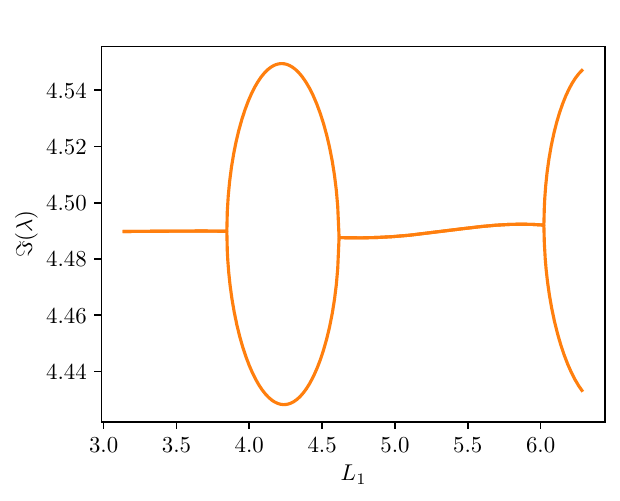}\\
    \caption{Real (left) and imaginary (right) parts of two pairs of eigenvalues coalescing in the parameter continuation in $L_1$ with $L_2 = 2\pi - L_1$.}
    \label{fig:collisions}
\end{figure}

When we continue the eigenvalues from $L_1 = 2\pi$, $L_2 = 0$ in Figure \ref{fig:spectral-picture-L1_0} 
to $L_1 = L_2 = \pi$ in Figure \ref{fig:roots-lines-and-stars}, we observe that eigenvalues starting on the imaginary axis might coalesce and thereby leave the imaginary axis. We made a focus on two of these collisions in Figure \ref{fig:collisions}. We observe that eigenvalues leave the imaginary axis after collision, but also return to the axis for a larger length $L_1$.

Following a similar numerical procedure, we also have investigated the evolution of eigenvalues $\lambda$ when $\sigma$ varies. For the sake of a better visualization, we made the study in the case $L_1=\frac{3\pi}{2}$ and $L_2=\frac\pi2$ rather than when $L_1=L_2=\pi$ where bifurcating eigenvalues are very close to one another. We have chosen to fix $c=0.5$ and to vary $\sigma$ from $0.05$ to $0.075$ (in such a way to remain close to $c^2/4$ and to isolate the candidate for the first bifurcation. The outcome is shown in Figure \ref{fig:ev_lin_sys_sigma_varies_0.050_0.075_c_0.5} (top). 

\begin{figure}[htpb!]
    \centering
    \includegraphics[width=0.49\linewidth]{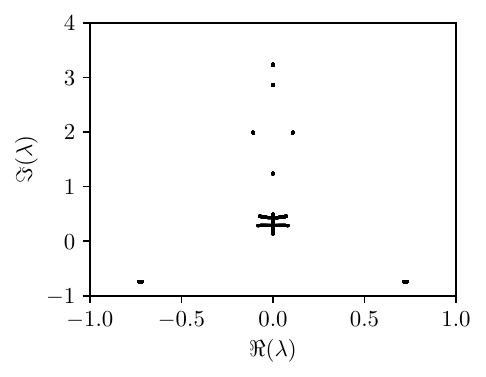}~
 \includegraphics[width=0.49\linewidth]{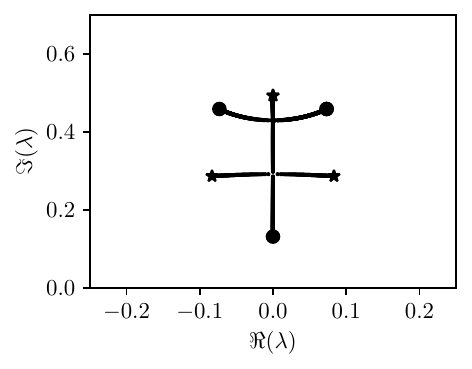} \\
     \includegraphics[width=0.49\linewidth]{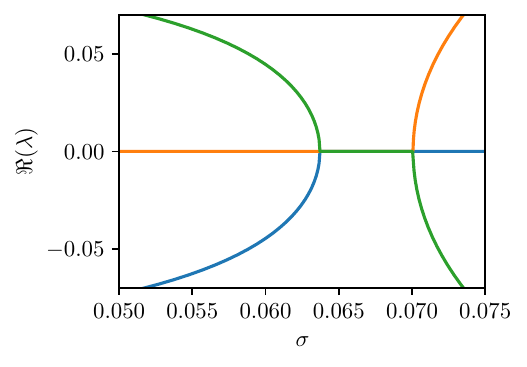}~
   \includegraphics[width=0.49\linewidth]{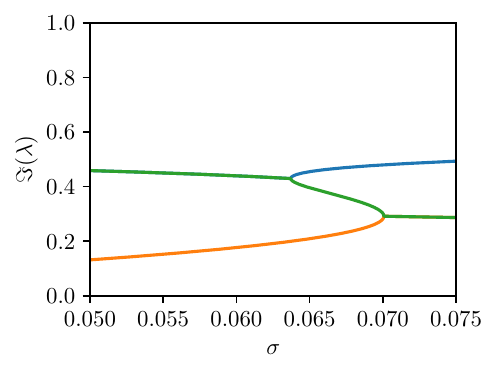}
    \caption{Top: Eigenvalues $\lambda$ when $c=0.5$ and $\sigma$ varies from $0.05$ (dots) to $0.075$ (stars) (larger picture on the left, zoomed picture on the right). Bottom: volution of the real (left) and imaginary (right) parts of the bifurcating eigenvalues with respect to $\sigma$.} \label{fig:ev_lin_sys_sigma_varies_0.050_0.075_c_0.5}
\end{figure}

We observe that two bifurcations occur for three eigenvalues located close to each other. To have a better understanding of the phenomenon, we have plotted the evolution of the real and imaginary parts of the eigenvalues involved in Figure \ref{fig:ev_lin_sys_sigma_varies_0.050_0.075_c_0.5} (bottom)  with respect to $\sigma$. For the first bifurcation, two complex eigenvalues coalesce and split along the imaginary axis. For the second bifurcation, one of these two imaginary eigenvalues and the third imaginary eigenvalue coalesce and split off the imaginary axis.

\section{Bifurcations of traveling modulating pulses}
\label{sec-7}

In this section, we develop analysis of  the spatial dynamical system (\ref{NLS-dynamics}) and give the proof of Theorem \ref{theorem-NLS-periodic}. Let us rewrite  (\ref{NLS-dynamics}) in variables $(u,r)$ instead of $(\tilde{\phi},\tilde{\eta})$ as
\begin{equation}
\label{NLS-dynamics-near}
\frac{d}{d\xi} \begin{pmatrix} u \\ r \end{pmatrix} = 
\begin{pmatrix} r \\ 
-\partial_x^2 u - 2 \partial_x r + i c r + \sigma u - 2 |u|^2 u \end{pmatrix},
\end{equation}
where $(u,r) \in \mathcal{D}$ satisfy the boundary conditions (\ref{vertex-5}) and (\ref{vertex-6}) on $\Gamma_0$. For simplification of computations, we can rewrite the system of two equations (\ref{NLS-dynamics-near}) 
with $\sigma = \sigma_0 + \varepsilon^2$ and $c = c_0$ as a scalar second-order equation:
\begin{equation}
\label{NLS-scalar-full}
-(\partial_x + \partial_{\xi})^2 u + i c_0 \partial_{\xi} u + \sigma_0 u + \varepsilon^2 u - 2 |u|^2 u  = 0.
\end{equation}

Both symmetries of the spectral problem on the necklace graph $\Gamma$ in (\ref{reduction-1}) and (\ref{reduction-2}) are invariant with respect to the time evolution of the scalar equation (\ref{NLS-scalar-full}). Hence we only consider solutions satisfying the symmetry condition in (\ref{reduction-2}), for which only eigenfunctions for the spectral bands $\{ \omega_m^{(2)}(\ell)\}_{m \in \mathbb{N}}$ in Proposition \ref{prop-spectrum} are relevant and the flat spectral bands $\{ \omega_m^{(1)}(\ell) \}_{m \in \mathbb{N}}$ can be ignored. 

We shall add the following assumption for the derivation of the normal form and for the justification analysis.

\begin{assumption}
	\label{ass-nondegeneracy}
	Pick $m_0 \in \N$ and $\ell_0 \in \mathbb{B}$ with $\omega_{m_0}''(\ell_0) > 0$ and define 
\begin{equation}
\label{bif-point-again}
\begin{cases}
\sigma_0 = -\omega_{m_0}(\ell_0) + \ell_0 \omega_{m_0}'(\ell_0), \\
c_0 = \omega_{m_0}'(\ell_0).
\end{cases}
\end{equation}
Assume that the set of purely imaginary roots $\lambda \in i \R$  of the characteristic equations 
\begin{equation}
\label{char-eq-again}
\sigma_0 + i c_0 \lambda + \omega_m(-i \lambda) = 0, \quad m \in \N,
\end{equation}
admit only simple roots, with the exception of the double root $\lambda = i \ell_0$ for $m = m_0$.
\end{assumption}

Formal derivation of the normal form is given in Section \ref{sec-7a}. Justification of the normal form can be found in Section \ref{sec-7b}.

\subsection{Formal derivation of the normal form}
\label{sec-7a}

By differentiating the spectral problem (\ref{spectral-prob}) in $\ell$ at $\ell_0$, we obtain for $m = m_0$:
\begin{align}
\MoveEqLeft
-(\partial_x + i \ell_0)^2 f_{m_0}(\ell_0,x) = \omega_{m_0}(\ell_0) f_{m_0}(\ell_0,x), 
\label{rel-1} \\
\MoveEqLeft
 \begin{multlined}[0.8\textwidth]
     -(\partial_x + i \ell_0)^2 \partial_{\ell} f_{m_0}(\ell_0,x) - 2 i (\partial_x + i \ell_0) f_{m_0}(\ell_0,x) = \\
  \omega_{m_0}(\ell_0) \partial_{\ell} f_{m_0}(\ell_0,x)  
 + \omega_{m_0}'(\ell_0) f_{m_0}(\ell_0,x),
 \end{multlined}
  \label{rel-2}
  \\
  \MoveEqLeft
  \begin{multlined}     
  -(\partial_x + i \ell_0)^2 \partial^2_{\ell} f_{m_0}(\ell_0,x) - 4 i (\partial_x + i \ell_0) \partial_{\ell} f_{m_0}(\ell_0,x) + 2 f_{m_0}(\ell_0,x) 
\\
 = \omega_{m_0}(\ell_0) \partial^2_{\ell} f_{m_0}(\ell_0,x)  + 2 \omega_{m_0}'(\ell_0) \partial_{\ell} f_{m_0}(\ell_0,x) + \omega_{m_0}''(\ell_0) f_{m_0}(\ell_0,x), 
\end{multlined}
\label{rel-3}
\end{align}
thanks to smoothness of the band function $\omega_{m_0}(\ell)$ and the Bloch function $f(\ell,x)$. Let us equip the Hilbert space $L^2(\Gamma_0)$ with the standard inner product $\langle \cdot, \cdot \rangle$. By taking the inner product of (\ref{rel-2}) and (\ref{rel-3}) with $f_{m_0}(\ell_0,\cdot)$ and using (\ref{rel-1}), we obtain 
\begin{align}
\omega_{m_0}'(\ell_0) = \frac{-2i \langle (\partial_x + i \ell_0) f_{m_0}(\ell_0,\cdot), f_{m_0}(\ell_0,\cdot)\rangle}{\| f_{m_0}(\ell_0,\cdot)\|^2_{L^2}} 
\label{inner-1} 
\end{align}
and
\begin{align}
& \frac{1}{2} \omega_{m_0}''(\ell_0) = 1 
\notag
\\
& \qquad - \frac{2i \langle (\partial_x + i \ell_0) \partial_{\ell} f_{m_0}(\ell_0,\cdot), f_{m_0}(\ell_0,\cdot)\rangle  + \omega_{m_0}'(\ell_0) \langle \partial_{\ell} f_{m_0}(\ell_0,\cdot), f_{m_0}(\ell_0,\cdot)\rangle}{\| f_{m_0}(\ell_0,\cdot)\|^2_{L^2}}. \label{inner-2}
\end{align}

We adopt Assumption \ref{ass-nondegeneracy} for the bifurcation point $(\sigma_0,c_0)$ of Lemma \ref{prop-expansion}  and let $\varepsilon > 0$ be a small parameter which defines the perturbation of $\sigma = \sigma_0 + \varepsilon^2$. The normal form for the bifurcation at $(\sigma_0,c_0)$ is derived by using the following formal expansion:
\begin{equation}
\label{formal-expansion}
u = \varepsilon u_1 + \varepsilon^2 u_2 + \varepsilon^3 u_3 + \varepsilon^4 R_{\varepsilon},
\end{equation}
where the leading-order term is written in the separable form 
\begin{equation}
    \label{first-term}
    u_1(\xi,x) = A(\varepsilon \xi) e^{i \ell_0 \xi} f_{m_0}(\ell_0,x),
\end{equation}
with the amplitude $A(\varepsilon \xi)$ to be determined. We substitute 
(\ref{formal-expansion}) into (\ref{NLS-scalar-full}) to get 
equations for $u_2$ and $u_3$ in powers of $\varepsilon^2$ and $\varepsilon^3$. This gives the leading-order approximation for the bifurcation provided that the remainder term 
$R_{\varepsilon}$ is bounded in $L^{\infty}(\mathbb{R} \times \Gamma)$ for sufficiently small $\varepsilon > 0$. The remainder term is to be estimated in Section \ref{sec-7b}.

Substitution of (\ref{formal-expansion}) into (\ref{NLS-scalar-full}) yields 
\begin{align*}
& \varepsilon \left[ -(\partial_x + \partial_{\xi})^2 u_1 + i c_0 \partial_{\xi} u_1 + \sigma_0 u_1 \right] \\
& \qquad + \varepsilon^2 \left[ -(\partial_x + \partial_{\xi})^2 u_2 + i c_0 \partial_{\xi} u_2 + \sigma_0 u_2 \right] \\
& \qquad + \varepsilon^3 \left[ -(\partial_x + \partial_{\xi})^2 u_3 + i c_0 \partial_{\xi} u_3 + \sigma_0 u_3  + u_1 - 2 |u_1|^2 u_1 \right] + \mathcal{O}(\varepsilon^4) = 0
\end{align*}
The order of $\mathcal{O}(\varepsilon)$ is satisfied by (\ref{first-term}) thanks to the definition of $\sigma_0$ in (\ref{bif-point-again}) and the spectral problem (\ref{rel-1}). The order of $\mathcal{O}(\varepsilon^2)$ yields the linear inhomogeneous equation
\begin{align}
& -(\partial_x + \partial_{\xi})^2 u_2 + i c_0 \partial_{\xi} u_2 + \sigma_0 u_2  \notag \\
& \qquad = A'(\varepsilon \xi) e^{i \ell_0 \xi}  \left[ 2 \partial_x f_{m_0}(\ell_0,x) + 2i \ell_0 f_{m_0}(\ell_0,x) - i c_0 f_{m_0}(\ell_0,x) \right].
\label{NLS-second-level}
\end{align}
The solution at the leading-order is given in the separable form:
\begin{equation}
\label{second-term}
u_2(\xi,x) = -i A'(\varepsilon \xi) e^{i \ell_0 \xi} \partial_{\ell} f_{m_0}(\ell_0,x),
\end{equation}
thanks to the definition of $(\sigma_0,c_0)$ in (\ref{bif-point-again}) and the derivative equation (\ref{rel-2}). Finally, the 
order of $\mathcal{O}(\varepsilon^3)$ yields the linear inhomogeneous equation
\begin{align}
&-(\partial_x + \partial_{\xi})^2 u_3 + i c_0 \partial_{\xi} u_3 + \sigma_0 u_3   \notag \\
& \qquad = A''(\varepsilon \xi) e^{i \ell_0 \xi} \left[ f_{m_0}(\ell_0,x) - 2 i \partial_x \partial_{\ell} f_{m_0}(\ell_0,x) + 2\ell_0 \partial_{\ell} f_{m_0}(\ell_0,x) - c_0 \partial_{\ell} f_{m_0}(\ell_0,x) \right]  \label{NLS-third-level} \\
& \qquad - A(\varepsilon \xi) e^{i \ell_0 \xi} f_{m_0}(\ell_0,x) + 2 |A(\varepsilon \xi)|^2 A(\varepsilon \xi) e^{i \ell_0 \xi} |f_{m_0}(\ell_0,x)|^2 f_{m_0}(\ell_0,x). \notag
\end{align}
By using the second derivative equation (\ref{rel-3}), we represent the solution in the form:
\begin{equation}
    \label{third-term}
    u_3(\xi,x) = -\frac{1}{2} A''(\varepsilon \xi) e^{i \ell_0 \xi} \partial^2_{\ell} f_{m_0}(\ell_0,x) + \tilde{u}_3(\xi,x)
\end{equation}
and rewrite the linear inhomogeneous equation (\ref{NLS-third-level}) at the leading order in the equivalent form:
\begin{multline}
    -(\partial_x + \partial_{\xi})^2 \tilde{u}_3 + i c_0 \partial_{\xi} \tilde{u}_3 + \sigma_0 \tilde{u}_3  = \frac{1}{2} \omega_{m_0}''(\ell_0) A''(\varepsilon \xi) e^{i \ell_0 \xi} f_{m_0}(\ell_0,x) \\  - A(\varepsilon \xi) e^{i \ell_0 \xi} f_{m_0}(\ell_0,x) + 2 |A(\varepsilon \xi)|^2 A(\varepsilon \xi) e^{i \ell_0 \xi} |f_{m_0}(\ell_0,x)|^2 f_{m_0}(\ell_0,x).
\label{NLS-third-level-again}
\end{multline}
By Fredholm's theorem, a solution $\tilde{u}_3(\xi,\cdot)$ exists in $H^2_{\rm NK}(\Gamma_0)$ for $\xi \in \mathbb{R}$ 
if and only if the right-hand side is orthogonal to $e^{i \ell_0 \xi} f_{m_0}(\ell_0,x)$, which is the generator of the kernel of the 
linear operator at the left-hand-side of (\ref{NLS-third-level-again}). Projecting the right-hand side of (\ref{NLS-third-level-again}) to $e^{i \ell_0 \xi} f_{m_0}(\ell_0,x)$ leads to the normal form equation 
\begin{equation}
    \label{normal-form}
    \frac{1}{2} \omega_{m_0}''(\ell_0) A'' - A + 2 \gamma |A|^2 A = 0, \qquad \gamma := \frac{\| f_{m_0}(\ell_0,\cdot) \|_{L^4(\Gamma_0)}^4}{\| f_{m_0}(\ell_0,\cdot) \|_{L^2(\Gamma_0)}^2}.
\end{equation}
If the solvability condition (\ref{normal-form}) is satisfied, there exists a solution $\tilde{u}_3(\xi,\cdot) \in H^2_{\rm NK}(\Gamma_0)$ to the linear inhomogeneous equation (\ref{NLS-third-level-again}) up to the leading order in the separable form
\begin{equation}
    \label{third-term-again}
    \tilde{u}_3(\xi,x) = 2 |A(\varepsilon \xi)|^2 A(\varepsilon \xi) e^{i \ell_0 \xi} h(x),
\end{equation}
where $h(x)$ is the solution of the linear inhomogeneous equation 
\begin{equation}
    \label{third-term-equation}
-(\partial_x + i \ell_0)^2 h - \omega_{m_0}(\ell_0) h  = |f_{m_0}(\ell_0,x)|^2 f_{m_0}(\ell_0,x) - \gamma f_{m_0}(\ell_0,x).
\end{equation}
The solution $h \in H^2_{\rm NK}(\Gamma_0)$ to (\ref{third-term-equation}) is uniquely defined under the orthogonality condition 
$$
\langle h, f_{m_0}(\ell_0,\cdot) \rangle = 0.
$$

\begin{remark}
The main outcome of the normal form equation (\ref{normal-form}) is that a homoclinic orbit exists for $\omega_{m_0}''(\ell_0) > 0$ since $\gamma > 0$. If the bifurcation of complex eigenvalues in Lemma \ref{prop-expansion} occurs for $\sigma < \sigma_0$ with $\omega_{m_0}''(\ell_0) < 0$, then a homoclinic orbit does not exist for $\sigma < \sigma_0$ since the cubic nonlinear term in the normal form equation (\ref{normal-form}) does not change the sign, whereas the two linear terms change the sign. This gives a criterion to distinguish bifurcations which lead to traveling modulating pulses from those which do not lead to traveling modulating pulses.  
\end{remark}

\begin{example}
    Consider the two splittings of double eigenvalues shown in Figure \ref{fig:ev_lin_sys_sigma_varies_0.050_0.075_c_0.5} for $L_1 = \frac{3 \pi}{2}$ and $L_2 = \frac{\pi}{2}$. When $\sigma$ is increased, the first splitting shows the transition from the complex to purely imaginary eigenvalues, hence $\omega''_{m_0}(\ell_0) < 0$ for this case with no homoclinic orbits from the normal form equation (\ref{normal-form}). On the other hand, the second splitting shows the transition from purely imaginary to complex eigenvalues, hence $\omega''_{m_0}(\ell_0) > 0$ for this case with a homoclinic orbit from the normal form equation (\ref{normal-form}). Note that for $L_1 = 2\pi$ and $L_2 = 0$, only the latter bifurcation is possible according to Example \ref{example-expansion} since $\omega_{m_0}''(\ell_0) > 0$ holds in the homogeneous case. 
\end{example}

\subsection{Justification of the normal form}
\label{sec-7b}

Based on the formal derivation of the normal form (\ref{normal-form}) in Section \ref{sec-7a}, we adopt the orthogonal decomposition 
\begin{equation}
\label{decomposition-orth}
u(\xi,x) = \varepsilon e^{i \ell_0 \xi} \left[ A_{\varepsilon}(\varepsilon \xi) f_{m_0}(\ell_0,x) + v(\xi,x)\right], \quad \langle v(\xi,\cdot), f_{m_0}(\ell_0,\cdot) \rangle = 0, 
\end{equation}
where $A_{\varepsilon}$ and $v$ are to be determined. The scalar equation (\ref{NLS-scalar-full}) is rewritten in the equivalent form:
\begin{multline}
     \left[ \mathcal{L} - \omega_{m_0}(\ell_0) \right] v + \left[ i \omega_{m_0}'(\ell_0) - 2(\partial_x + i \ell_0) \right] \partial_{\xi} v - \partial_{\xi}^2 v  \\
 = \varepsilon A_{\varepsilon}'  \left[ 2 (\partial_x + i \ell_0) f_{m_0} - i \omega_{m_0}'(\ell_0) f_{m_0} \right]   \\
+ \varepsilon^2 A_{\varepsilon}'' f_{m_0} + \varepsilon^2 \left[ 2 |A_{\varepsilon} f_{m_0} + v|^2 - 1 \right] (A_{\varepsilon} f_{m_0} + v),
\label{dynamics}
\end{multline}
where $\mathcal{L} := -(\partial_x + i \ell_0)^2$ is the Schr\"{o}dinger operator considered in $L^2(\Gamma_0)$ with the domain in $H^2_{\rm NK}(\Gamma_0)$. We complete the scalar equation (\ref{dynamics}) with the projection equation to $f_{m_0}$ given by 
\begin{align}
\label{projection}
-2 \frac{d}{d \xi} \langle \partial_x v, f_{m_0} \rangle 
= \varepsilon^2 (A_{\varepsilon}'' - A_{\varepsilon}) \| f_{m_0} \|^2_{L^2} + 2 \varepsilon^2 \langle 
|A_{\varepsilon} f_{m_0} + v|^2 (A_{\varepsilon} f_{m_0} + v), f_{m_0} \rangle, 
\end{align}
where we have used (\ref{inner-1}) to remove the $\mathcal{O}(\varepsilon)$ term. We are now ready to proceed with analysis of the system (\ref{dynamics}) and (\ref{projection}) which gives the proof of Theorem \ref{theorem-NLS-periodic}.

\subsubsection{Near-identity transformations}

It follows from (\ref{second-term}) that the near-identity transformation given by 
\begin{equation}
\label{transformation-1}
v(\xi,x) = -i \varepsilon A_{\varepsilon}'(\varepsilon \xi) \left[ \partial_{\ell} f_{m_0}(\ell_0,x) - \frac{\langle \partial_{\ell} f_{m_0}, f_{m_0} \rangle}{\| f_{m_0}\|^2_{L^2}} f_{m_0}(\ell_0,\cdot) \right] 
+ \tilde{v}(\xi,x),
\end{equation}
where $\tilde{v}$ satisfies the orthogonality condition $\langle \tilde{v}(\xi,\cdot), f_{m_0}(\ell_0,\cdot) \rangle = 0$, transforms (\ref{projection}) into 
\begin{align}
& \varepsilon^2 \left( \frac{1}{2} \omega_{m_0}''(\ell_0) A_{\varepsilon}'' - A_{\varepsilon} 
\right) \| f_{m_0} \|^2_{L^2} + 2 \varepsilon^2 \langle 
|A_{\varepsilon} f_{m_0} + v|^2 (A_{\varepsilon} f_{m_0} + v), f_{m_0} \rangle \notag \\
& \qquad = -2 \frac{d}{d \xi} \langle \partial_x \tilde{v}, f_{m_0} \rangle,
\label{normal-form-perturbed}
\end{align}
where we have used (\ref{inner-2}) to simplify the $\mathcal{O}(\varepsilon^2)$ term.  Truncation of $v$ in the last term of the left-hand side of (\ref{normal-form-perturbed}) and $\tilde{v}$ in the right-hand-side of (\ref{normal-form-perturbed}) yields formally the normal-form equation (\ref{normal-form}). 

Transformation (\ref{transformation-1}) removes the $\mathcal{O}(\varepsilon)$ terms in equation (\ref{dynamics}) and satisfies the constraint 
$\langle v(\xi,\cdot), f_{m_0}(\ell_0,\cdot) \rangle = 0$. Furthermore, it follows from (\ref{third-term}) and (\ref{third-term-again}) that the second near-identity transformation 
\begin{align}
\tilde{v}(\xi,x) &= -\frac{1}{2} \varepsilon^2 A_{\varepsilon}''(\varepsilon \xi) \left[ \partial^2_{\ell} f_{m_0}(\ell_0,x) - \frac{\langle \partial^2_{\ell} f_{m_0}, f_{m_0} \rangle}{\| f_{m_0}\|^2_{L^2}} f_{m_0}(\ell_0,\cdot) \right]  \notag \\
& \quad 
+ 2 |A_{\varepsilon}(\varepsilon \xi)|^2 A_{\varepsilon}(\varepsilon \xi) h(x) 
+ \tilde{\tilde{v}}(\xi,x),\label{transformation-2}
\end{align}
with $h$ solving (\ref{third-term-equation}) and $\tilde{\tilde{v}}$ satisfying the orthogonality condition $\langle \tilde{\tilde{v}}(\xi,\cdot), f_{m_0}(\ell_0,\cdot) \rangle = 0$
removes the $\mathcal{O}(\varepsilon^2)$ terms in equation (\ref{dynamics}) and satisfies the constraint 
$\langle v(\xi,\cdot), f_{m_0}(\ell_0,\cdot) \rangle = 0$.

After the near-identity transformations (\ref{transformation-1}) and (\ref{transformation-2}), 
the right-hand-side of equation (\ref{dynamics}) truncated at $\tilde{\tilde{v}} = 0$ is of the formal order of $\mathcal{O}(\varepsilon^3)$.
Hence, we rewrite equation (\ref{dynamics}) in the abstract form:
\begin{align}
& \left[ \mathcal{L} - \omega_{m_0}(\ell_0) \right] v + \left[ i \omega_{m_0}'(\ell_0) - 2(\partial_x + i \ell_0) \right] \partial_{\xi} v - \partial_{\xi}^2 v \notag \\
& \quad = -\varepsilon^2 v + \varepsilon^3 H + \varepsilon^2 N(v),
\label{dynamics-trans}
\end{align}
where we dropped the tilde notations for $v$ and introduced the non-homogeneous term $H$ and the nonlinear term $N(v)$ with the following properties. 

\begin{lemma}
	\label{lem-nonlinear-1}
	Assume that $A_{\varepsilon}$ is a smooth homoclinic orbit in the variable $\varepsilon \xi$ such that 
	\begin{align}
	\label{C-0}
	\sup_{\xi \in \R} \left(|A_{\varepsilon}(\varepsilon \xi)| + |A_{\varepsilon}'(\varepsilon \xi)| + |A_{\varepsilon}''(\varepsilon \xi)|\right) \leq C_0 < \infty.
\end{align}
Then, $H$ and $N(v)$ in (\ref{dynamics-trans}) satisfy the bounds 
	\begin{align}
	\| H(\xi,\cdot) \|_{H^2_{\rm NK}} \leq C \left( |A_{\varepsilon}'''(\varepsilon \xi)| + |A_{\varepsilon}''''(\varepsilon \xi)| \right) \label{bound-1} 
    \end{align}
    and
    \begin{align}
& \| N(v)(\xi,\cdot) \|_{H^2_{\rm NK}} \leq C \left( |A_{\varepsilon}(\varepsilon \xi)| + |A_{\varepsilon}'(\varepsilon \xi)| + |A_{\varepsilon}''(\varepsilon \xi)| \right) \| v(\xi,\cdot) \|_{H^2_{\rm NK}} (1 + \| v(\xi,\cdot) \|_{H^2_{\rm NK}}) \notag \\ 
& \qquad + C \| v(\xi,\cdot) \|_{H^2_{\rm NK}}^3,\label{bound-2}
	\end{align}
	where $C > 0$ is a generic constant which only depends on $C_0$ in (\ref{C-0}).
\end{lemma}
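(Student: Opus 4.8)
The plan is to read off the forcing $H$ and the nonlinearity $N(v)$ directly from the construction of Section~\ref{sec-7a} and then estimate each in $H^2_{\rm NK}(\Gamma_0)$, exploiting that this space is a Banach algebra. First I would substitute the full correction $v_1 + \tilde v_2 + \tilde{\tilde v}$, with $v_1$ the profile in (\ref{transformation-1}), $\tilde v_2$ the first two terms of (\ref{transformation-2}), and $\tilde{\tilde v}$ the final unknown (denoted $v$ in (\ref{dynamics-trans})), into the left-hand side operator of (\ref{dynamics}), expand the cubic, and sort the output into three groups: the terms cancelled by design, the $v$-independent remainder $\varepsilon^3 H$, and the $v$-dependent remainder $\varepsilon^2 N(v)$. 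By the very choice of $v_1,\tilde v_2, h$ and of the normal-form equation (\ref{normal-form}) for $A_\varepsilon$, made through the identities (\ref{rel-1})--(\ref{rel-3}) and the corrector equation (\ref{third-term-equation}), all contributions of formal order $\varepsilon$ and $\varepsilon^2$ cancel, so that $H$ and $N(v)$ collect only genuinely higher-order terms.

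For (\ref{bound-1}) I would compute $H$ explicitly. The surviving $v$-independent terms come from the operator $[\,i\omega_{m_0}'(\ell_0) - 2(\partial_x + i\ell_0)\,]\partial_\xi - \partial_\xi^2$ acting on the slowly varying profiles $v_1 \propto \varepsilon A_\varepsilon'$, $\tilde v_2 \propto \varepsilon^2 A_\varepsilon''$, and $\varepsilon^2|A_\varepsilon|^2 A_\varepsilon h$: since every $\partial_\xi$ produces a factor $\varepsilon$ together with one more slow derivative, the leading survivors are $-\partial_\xi^2 v_1 = i\varepsilon^3 A_\varepsilon'''(\varepsilon\xi)\,g_1(x)$ and the analogous $\varepsilon^4 A_\varepsilon''''(\varepsilon\xi)\,g_2(x)$ term from $\tilde v_2$, where $g_1,g_2$ are fixed profiles built from $f_{m_0},\partial_\ell f_{m_0},\partial_\ell^2 f_{m_0}$. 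I would then use the normal-form ODE (\ref{normal-form}) and its $\xi$-derivatives to trade the slow derivatives of the cubic $|A_\varepsilon|^2 A_\varepsilon$ for $A_\varepsilon'$ and $A_\varepsilon'''$, and verify that the remaining lower-order pieces either cancel or reorganize into multiples of $A_\varepsilon'''$ and $A_\varepsilon''''$. Taking the $H^2_{\rm NK}$-norm in $x$ of the resulting finite sum $\sum_j (c_j A_\varepsilon''' + \varepsilon\, d_j A_\varepsilon'''')\,G_j(x)$ with fixed $G_j\in H^2_{\rm NK}(\Gamma_0)$ then yields (\ref{bound-1}), after absorbing the harmless factor $\varepsilon$.

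For (\ref{bound-2}) I would expand $|A_\varepsilon f_{m_0}+v_{\rm full}|^2(A_\varepsilon f_{m_0}+v_{\rm full})$ with $v_{\rm full} = v_1+\tilde v_2+v$, discard its $v$-independent part (already assigned to $H$ and to the normal form), and collect the terms that are linear, quadratic, and cubic in the final unknown $v$. Their coefficients are products of $A_\varepsilon f_{m_0}$, $v_1$ and $\tilde v_2$ with fixed profiles; since $\|v_1\|_{H^2_{\rm NK}}+\|\tilde v_2\|_{H^2_{\rm NK}}\leq C(|A_\varepsilon|+|A_\varepsilon'|+|A_\varepsilon''|)$ by (\ref{C-0}), the algebra estimate $\|fg\|_{H^2}\lesssim \|f\|_{H^2}\|g\|_{H^2}$ on the finite union of compact intervals $\Gamma_0$ gives the linear-in-$v$ terms the prefactor $(|A_\varepsilon|+|A_\varepsilon'|+|A_\varepsilon''|)$, the quadratic-in-$v$ terms the same prefactor times $\|v\|_{H^2_{\rm NK}}$, and the pure cubic term $\|v\|_{H^2_{\rm NK}}^3$, which is exactly (\ref{bound-2}).

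The routine part is the multilinear estimate for $N(v)$, which follows once the embedding $H^2(\Gamma_0)\hookrightarrow C(\Gamma_0)$, the algebra property, and the preservation of the vertex conditions under the relevant products are established. The main obstacle is (\ref{bound-1}): I must verify that the near-identity transformations genuinely cancel every term of order below $\varepsilon^3$ in the $v$-equation and, more delicately, that the surviving residual can be organized so that its $H^2_{\rm NK}$-norm is controlled purely by $|A_\varepsilon'''(\varepsilon\xi)|$ and $|A_\varepsilon''''(\varepsilon\xi)|$. This requires careful bookkeeping of how $\partial_\xi$ interacts with the slow dependence on $\varepsilon\xi$ and systematic use of (\ref{rel-2}), (\ref{rel-3}), (\ref{third-term-equation}) and the normal form to eliminate the lower-order slow derivatives and the cubic-derivative terms $(|A_\varepsilon|^2 A_\varepsilon)'$ in favour of $A_\varepsilon'''$; confirming these cancellations, together with checking that every fixed profile $g_1,g_2,G_j,h$ lies in $H^2_{\rm NK}(\Gamma_0)$, is the crux of the argument.
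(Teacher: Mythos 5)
Your proposal follows essentially the same route as the paper's (very terse) proof: substitute the near-identity transformations (\ref{transformation-1})--(\ref{transformation-2}) into (\ref{dynamics}), use the identities (\ref{rel-1})--(\ref{rel-3}), the corrector equation (\ref{third-term-equation}) and the normal form (\ref{normal-form}) to cancel everything below order $\varepsilon^3$, and then exploit that every surviving term is a product of a smooth function of $\varepsilon\xi$ with a fixed $x$-profile in $H^2_{\rm NK}(\Gamma_0)$, so the bounds follow from the embedding and algebra property of $H^2$ on the compact edges of $\Gamma_0$. Your write-up is in fact more detailed than the paper's one-sentence argument and correctly flags the only delicate point --- the bookkeeping needed to show the $v$-independent residual is controlled by $|A_{\varepsilon}'''|$ and $|A_{\varepsilon}''''|$ alone --- which the paper likewise leaves implicit.
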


\begin{proof}
	Bounds (\ref{bound-1}) and (\ref{bound-2}) follow by substituting  (\ref{transformation-1}) and (\ref{transformation-2}) into equation (\ref{dynamics}) since all terms containing $A_{\varepsilon}$ are written in the product form of functions of $\varepsilon \xi$ and $x$, where functions of $\varepsilon \xi$ are smooth and functions of $x$ belong to $H^2_{\rm NK}(\Gamma_0)$. 
\end{proof}

\subsubsection{Construction of a local stable-center manifold}

By Lemma \ref{lem-linearized}, the left-hand-side of equation (\ref{dynamics-trans}) can be solved 
as a linear superposition of normal modes:
\begin{equation}
\label{decomposition-near}
v(\xi,x) = \sum_{\lambda_k \in \Lambda \backslash \{ i \ell_0 \}} v_k(\xi) f_m(-i\lambda_k,x),  
\end{equation}
where $\Lambda$ denotes the set of admissible values of $\lambda$ from roots of the characteristic equations (\ref{char-eq-again}) at the bifurcation point $(\sigma,c) = (\sigma_0,c_0)$ for some $\omega_m(-i\lambda)$ found from roots of the characteristic equation (\ref{char-eq}). The double root $\lambda = i \ell_0$ which exists by Lemma \ref{prop-expansion} is excluded from the decomposition (\ref{decomposition-near}) since the corresponding mode is included in the $A_{\varepsilon}$ term of the decomposition (\ref{decomposition-orth}). 

We further define the decomposition 
$$
\Lambda = \{ i \ell_0 \} \cup \Lambda_c \cup \Lambda_s \cup \Lambda_u,
$$ 
where $\Lambda_c$ includes roots with ${\rm Re}(\lambda) = 0$, $\Lambda_s$ includes roots with ${\rm Re}(\lambda) < 0$, and $\Lambda_u$ includes roots with ${\rm Re}(\lambda) > 0$. By Assumption \ref{ass-nondegeneracy}, all roots in $\Lambda_c$ are simple. 

Let us use the decomposition (\ref{decomposition-near}) to represent the solution 
of the initial-value problem
\begin{equation}
    \label{initial-value}
\begin{cases} 
[ \mathcal{L} - \omega_{m_0}(\ell_0) ] v + [ i \omega_{m_0}'(\ell_0) - 2(\partial_x + i \ell_0) ] \partial_{\xi} v - \partial_{\xi}^2 v = 0, \\
(v, \partial_{\xi} v) |_{\xi = 0} = {\bf a}, \end{cases}
\end{equation}
as $v(\xi) = S(\xi) {\bf a}$, where the dependence of $x$ is dropped to make notations easy. Restrictions of $S(\xi)$ to $\Lambda_{c,s,u}$ are then denoted as $S_{c,s,u}(\xi)$, respectively. 
Assumption \ref{ass-nondegeneracy} can be rewritten in the form used for construction of the local stable-center manifold. 

\begin{assumption}
	\label{ass-semigroup}
	There exists $K > 0$ such that 
	\begin{align*}
	\| S_c(\xi) \|_{H^2_{\rm NK} \to H^2_{\rm NK}} \leq K, \quad & \xi \in \R, \\
	\| S_s(\xi) \|_{H^2_{\rm NK} \to H^2_{\rm NK}} \leq K, \quad & \xi \geq 0, \\
	\| S_u(\xi) \|_{H^2_{\rm NK} \to H^2_{\rm NK}} \leq K, \quad & \xi \leq 0.
	\end{align*}
\end{assumption}

The local center-stable manifold is given by the following lemma. 

\begin{lemma}
\label{lem-nonlinear-2}
	Assume that $A_{\varepsilon}$ is a smooth homoclinic orbit satisfying 
\begin{equation}
\label{C-0-extended}
\sup_{\xi \in \mathbb{R}} \sum_{k=0}^{2} |A_{\varepsilon}^{(k)}(\varepsilon \xi)| +	\varepsilon \int_{\mathbb{R}} \sum_{k=0}^{4} |A_{\varepsilon}^{(k)}(\varepsilon \xi)| d \xi \leq C_0 < \infty,
\end{equation}
with a constant $C_0$ independent of $\varepsilon > 0$ and that Assumption \ref{ass-semigroup} is satisfied. For every $N_0 > 0$, there exist $\varepsilon_0 > 0$ and $C > 0$ such that for every $\varepsilon \in (0,\varepsilon_0)$, there exists a solution to (\ref{dynamics-trans}) satisfying 
\begin{equation}
\label{bound-on-v}
\sup_{\xi \in [0,N_0 \varepsilon^{-1}]} \| v(\xi,\cdot) \|_{H^2_{\rm NK}} \leq C \varepsilon^2, \quad \sup_{\xi \in [0,N_0 \varepsilon^{-1}]} \| \partial_{\xi} v(\xi,\cdot) \|_{H^2_{\rm NK}} \leq C \varepsilon^3.
\end{equation}
\end{lemma}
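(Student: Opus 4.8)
The plan is to construct the solution by a Lyapunov--Perron fixed-point argument on the finite interval $[0,N_0\varepsilon^{-1}]$, using the variation-of-constants formula associated with the linear flow $S(\xi)$ of the left-hand side of (\ref{dynamics-trans}). First I would recast (\ref{dynamics-trans}) as a first-order system for $\mathbf{v}=(v,\partial_\xi v)$, treating the right-hand side $-\varepsilon^2 v+\varepsilon^3 H+\varepsilon^2 N(v)$ as a forcing $\mathbf{F}(\xi,\mathbf{v})$ that enters only the second component. The linear flow is the operator from (\ref{initial-value})--(\ref{decomposition-near}), which splits according to $\Lambda\setminus\{i\ell_0\}=\Lambda_c\cup\Lambda_s\cup\Lambda_u$ into $S_c,S_s,S_u$. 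Assumption \ref{ass-semigroup} provides the uniform bounds $\|S_c(\xi)\|\le K$ for $\xi\in\R$, $\|S_s(\xi)\|\le K$ for $\xi\ge 0$, and $\|S_u(\xi)\|\le K$ for $\xi\le 0$ on $H^2_{\rm NK}$, which is exactly what absorbs the infinite-dimensionality of the problem. I would then define, with $T=N_0\varepsilon^{-1}$ and $S_{cs}=S_c+S_s$, the fixed-point operator
\[
\mathcal{T}(\mathbf{v})(\xi)=S_{cs}(\xi)\mathbf{a}+\int_0^\xi S_{cs}(\xi-\tau)\mathbf{F}(\tau)\,d\tau-\int_\xi^{T}S_u(\xi-\tau)\mathbf{F}(\tau)\,d\tau,
\]
where the center--stable part is propagated forward from $\xi=0$ and the unstable part backward from $\xi=T$, so that every propagator acts only on the sign of its argument for which Assumption \ref{ass-semigroup} guarantees boundedness. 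Choosing $\mathbf{a}=0$ (or of order $\varepsilon^2$) selects the desired small solution.

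The core estimate for $\|v\|$ exploits that the forcing is integrated over an interval of length $N_0\varepsilon^{-1}$. By (\ref{bound-1}) together with the integral bound in (\ref{C-0-extended}) one has $\int_0^T\|H(\tau)\|_{H^2_{\rm NK}}\,d\tau\le C\varepsilon^{-1}$, so the source $\varepsilon^3 H$ contributes a term of size $K\varepsilon^3\cdot C\varepsilon^{-1}=O(\varepsilon^2)$ after applying the uniformly bounded propagators. The linear perturbation $-\varepsilon^2 v$ and, via (\ref{bound-2}), the nonlinearity $\varepsilon^2 N(v)$ contribute at most $K\varepsilon^2 T\sup\|v\|=KN_0\varepsilon\sup\|v\|$ on the ball $\sup_{[0,T]}\|v\|_{H^2_{\rm NK}}\le C\varepsilon^2$; for $\varepsilon$ small these are contraction factors of size $O(\varepsilon)$. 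Hence $\mathcal{T}$ maps the ball of radius $C\varepsilon^2$ into itself and is a contraction in $C([0,T];H^2_{\rm NK})$, which yields the first bound in (\ref{bound-on-v}).

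The main obstacle is the sharper derivative bound $\|\partial_\xi v\|_{H^2_{\rm NK}}\le C\varepsilon^3$, which is one power of $\varepsilon$ smaller than $\|v\|$ and does not follow from the crude Duhamel estimate. I would obtain it by a self-consistent bootstrap on the second component $w=\partial_\xi v$: differentiating the integral equation and integrating by parts in $\tau$ transfers the $\xi$-derivative of the kernel onto a $\tau$-derivative of the forcing. The boundary contributions vanish or are controlled because the $(1,2)$-block of $S(0)$ is zero and $\mathbf{F}$ is small at the endpoints. Crucially, every forcing term depends on $\xi$ only through the slow variable $\varepsilon\xi$ (through $A_\varepsilon^{(k)}(\varepsilon\xi)$ and, self-consistently, through $w=O(\varepsilon^3)$), so each $\tau$-derivative produces an extra factor of $\varepsilon$: the source becomes $\varepsilon^3\partial_\tau H=\varepsilon^4 H'(\varepsilon\,\cdot)$, whose integral over $[0,T]$ is $O(\varepsilon^3)$, while $\partial_\tau(\varepsilon^2 v)=\varepsilon^2 w$ and the differentiated nonlinear terms are even smaller. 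Closing the estimates for $(\|v\|_\infty,\|w\|_\infty)$ simultaneously in the weighted ball $\{\sup\|v\|\le C\varepsilon^2,\ \sup\|w\|\le C\varepsilon^3\}$ then gives both bounds in (\ref{bound-on-v}).

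A final routine point to verify along the way is that $w=\partial_\xi v$ indeed inherits $H^2_{\rm NK}$-regularity, rather than merely the $H^1_{\rm C}$-regularity built into the phase space $\mathcal{D}$ of (\ref{phase-space}); this follows from the fact that the forcing $\mathbf{F}$ is $H^2_{\rm NK}$-valued by Lemma \ref{lem-nonlinear-1} and that the propagators $S_{c,s,u}$ map $H^2_{\rm NK}$ into itself, so that the stated norm $\|\partial_\xi v\|_{H^2_{\rm NK}}$ is finite and the integration-by-parts manipulations are justified.
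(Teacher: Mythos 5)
Your proposal is correct and follows essentially the same route as the paper: a Lyapunov--Perron/Duhamel fixed-point system split along $\Lambda_c\cup\Lambda_s\cup\Lambda_u$ with the uniform propagator bounds of Assumption \ref{ass-semigroup} (your forward-from-$0$/backward-from-$T$ grouping is an equivalent rearrangement of the paper's three integral equations, and your contraction argument replaces the paper's Gronwall inequality), together with the same key source estimate $\varepsilon^3\int_0^{\xi_0}\|H\|\,d\xi' = \mathcal{O}(\varepsilon^2)$ from (\ref{bound-1}) and (\ref{C-0-extended}) and the smallness of $\varepsilon^2\xi_0=N_0\varepsilon$. For the second bound in (\ref{bound-on-v}), your integration-by-parts bootstrap is a fleshed-out version of the paper's one-line appeal to the slow-variable product structure of $H$; note only that differentiating the forcing consumes one more derivative, $A_{\varepsilon}^{(5)}$, than (\ref{C-0-extended}) formally provides, which is harmless since $A_{\varepsilon}$ is a smooth (in the application, $H^{\infty}$) homoclinic orbit.
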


\begin{proof}
	Let $\Pi_{c,s,u}$ be projection operators defined by the restriction of $S(\xi)$ to $\Lambda_{c,s,u}$. Hence, we write $v_{c,s,u}(\xi,\cdot) = \Pi_{c,s,u} v(\xi,\cdot)$. By using Assumption \ref{ass-semigroup}, we obtain the local center-stable manifold in components $v_{c,s,u}(\xi,\cdot) \in H^2_{\rm NK}(\Gamma_0)$ from solutions of the following system of integral equations, which follow from (\ref{dynamics-trans}):
\begin{align*}
v_c(\xi,\cdot) &= S_c(\xi) {\bf a} + \int_0^{\xi} S_c(\xi-\xi') \left[ -\varepsilon^2 v_c(\xi',\cdot) + \varepsilon^3 \Pi_c H(\xi',\cdot) + \varepsilon^2 \Pi_c N(v(\xi',\cdot)) \right] d\xi', \\
v_s(\xi,\cdot) &= S_s(\xi) {\bf b} - \int_{\xi}^{\xi_0} S_s(\xi-\xi') \left[ -\varepsilon^2 v_s(\xi',\cdot) + \varepsilon^3 \Pi_s H(\xi',\cdot) + \varepsilon^2 \Pi_s N(v(\xi',\cdot)) \right] d\xi', \\
v_u(\xi,\cdot) &= - \int_{\xi}^{\xi_0} S_u(\xi-\xi') \left[ -\varepsilon^2 v_u(\xi',\cdot) + \varepsilon^3 \Pi_u H(\xi',\cdot) + \varepsilon^2 \Pi_u N(v(\xi',\cdot)) \right] d\xi', 
\end{align*}
where $\xi_0 := N_0 \varepsilon^{-1}$ and the parameters ${\bf a} := (v_c(0,\cdot),\partial_{\xi} v_c(0,\cdot)) \in H^2_{\rm NK}(\Gamma_0) \times H^2_{\rm NK}(\Gamma_0)$, ${\bf b} := [S_s(\xi_0)]^{-1} (v_s(\xi_0,\cdot),\partial_{\xi} v_s(\xi_0,\cdot))  \in H^2_{\rm NK}(\Gamma_0) \times H^2_{\rm NK}(\Gamma_0)$ are at our disposal. 

By using the bounds (\ref{bound-1}) and (\ref{bound-2}) of Lemma \ref{lem-nonlinear-1}, the bounds of Assumption \ref{ass-semigroup}, and Gronwall's inequality, we obtain estimates for the solutions of the integral equations:
\begin{align*}
& \sup_{\xi \in [0,\xi_0]} \| v(\xi,\cdot) \|_{H^2_{\rm NK}} \leq 
K \left( \| {\bf a} \|_{H^2_{\rm NK}} + \| {\bf b} \|_{H^2_{\rm NK}}
+ \varepsilon^3 C \int_0^{\xi_0} \left( |A_{\varepsilon}'''(\varepsilon \xi')| + |A_{\varepsilon}''''(\varepsilon \xi')| \right) d\xi' \right) \\
& \qquad 
+ \varepsilon^2 K \left( \sup_{\xi \in [0,\xi_0]} \| v(\xi,\cdot) \|_{H^2_{\rm NK}} \right) \left( 1 + C \int_0^{\xi_0} \left( |A_{\varepsilon}(\varepsilon \xi')| + |A_{\varepsilon}'(\varepsilon \xi')| + |A_{\varepsilon}''(\varepsilon \xi')|\right) d\xi' \right)  \\
& \qquad 
+ \varepsilon^2 K C \xi_0 \| v(\xi,\cdot) \|^3_{H^2_{\rm NK}}.
\end{align*}
Since $\varepsilon^2 \xi_0 = \varepsilon N_0$ is small and the bound (\ref{C-0-extended}) is assumed, the restriction 
$$
\| {\bf a} \|_{H^2_{\rm NK}} + \| {\bf b} \|_{H^2_{\rm NK}} \leq C_0 \varepsilon^2
$$ 
implies the first bound in   (\ref{bound-on-v}). The second bound in (\ref{bound-on-v}) follows from the fact that $H(\xi,x)$ in (\ref{dynamics-trans}) is written as a sum of products of functions of $\varepsilon \xi$ and $x$. Hence, the derivative of $v(\xi,\cdot)$ in $\xi$ is one order smaller in powers of $\varepsilon$.
\end{proof}

\subsubsection{Finding a homoclinic orbit under the reversibility constraint}

We impose the following reversibility constraints on the solutions of the spatial dynamical system (\ref{NLS-dynamics-near}):
\begin{equation}
\label{reversibility}
{\rm Im}(u)(0,x_0) = 0, \qquad {\rm Re}(r)(0,x_0) = 0,
\end{equation}
or equivalently, the constraint on solutions of the scalar equation (\ref{NLS-scalar-full}):
\begin{equation}
\label{reversibility-scalar}
{\rm Im}(u)(0,x_0) = 0, \quad \partial_{\xi} {\rm Re}(u)(0,x_0) = 0,
\end{equation}
where $x_0 \in \{\frac{1}{2} L_1, L_1 + \frac{1}{2} L_2\}$ is a point of symmetry of $\Gamma$ located in the basic cell $\Gamma_0$. The reversibility constraint (\ref{reversibility}) picks the specific symmetric solutions of (\ref{NLS-dynamics-near}) satisfying 
\begin{equation}
\label{reversion}
\left\{ (u,r) \in C^1(\mathbb{R},\mathcal{D}) : \;\;
\begin{cases} {\rm Im}(u)(-\xi,x_0-x) = -{\rm Im}(u)(\xi,x-x_0), \\ {\rm Re}(r)(-\xi,x_0-x) = -{\rm Re}(r)(\xi,x-x_0) \end{cases} \right\},
\end{equation}
or equivalently, 
\begin{equation}
\label{reversion-scalar}
\left\{ u \in C^1(\mathbb{R},H^2_{\rm NK}) : \;\; 
\begin{cases} {\rm Im}(u)(-\xi,x_0-x) = -{\rm Im}(u)(\xi,x-x_0), \\ \partial_{\xi} {\rm Re}(u)(-\xi,x_0-x) = -\partial_{\xi} {\rm Re}(u)(\xi,x-x_0) \end{cases} \right\},
\end{equation}
If the solution $u(\xi,x)$ satisfies (\ref{reversion-scalar}) and $A_{\varepsilon}(\xi)$ belongs to 
\begin{equation}
\label{reversion-A}
X_R := \left\{ A \in C^1(\mathbb{R}) : \;\; 
\begin{cases} 
{\rm Im}(A)(-\xi) = -{\rm Im}(A)(\xi), \\ {\rm Re}(A)(-\xi) = {\rm Re}(A)(\xi) \end{cases}  \right\},
\end{equation}
then the decomposition (\ref{decomposition-orth}) implies that $v(\xi,x)$ satisfies (\ref{reversion-scalar}) because the eigenfunction $w(\xi,x) := e^{i \ell_0 \xi} f(\ell_0,x)$ satisfies the reversibility constraint $\bar{w}(-\xi,x_0-x) = w(\xi,x-x_0)$. 

Both equations (\ref{dynamics}) and (\ref{projection}) inherit the same reversion symmetry (\ref{reversion-scalar}) and (\ref{reversion-A}), respectively. Moreover, for any given $A_{\varepsilon} \in H^{\infty}(\mathbb{R})$ satisfying (\ref{C-0-extended}), we can use the cut-off operators as in \cite{DPS} to cut the support of $A_{\varepsilon}$ on $[0,\xi_0]$ and construct the solution of Lemma \ref{lem-nonlinear-2} satisfying the symmetry (\ref{reversion-scalar}). With this decomposition, we perform the final step of obtaining solutions of the projection equation (\ref{projection}). 

\begin{lemma}
	\label{lem-nonlinear-3}
	Let us define the mapping $A_{\varepsilon} \to  v_{\varepsilon} \in C^1(\mathbb{R},H^2_{\rm NK}(\Gamma_0))$ by the solution of Lemma \ref{lem-nonlinear-2} extended from $[0,\xi_0]$ to $[-\xi_0,\xi_0]$ according to the reversion constraint (\ref{reversion-scalar}) and concatanated by any arbitrary smooth function bounded on $(-\infty,-\xi_0] \cup [x_0,\infty)$. Then, there exists a smooth homoclinic solution $A_{\varepsilon}$ to (\ref{projection}) 
satisfying (\ref{C-0-extended}) and (\ref{reversion-A}).  Moreover, 
there is $\varepsilon$-independent constant $C > 0$ such that 
$\| A_{\varepsilon} - A \|_{L^{\infty}} \leq C \varepsilon$, where 
$A$ is a smooth homoclinic orbit of the normal form equation (\ref{normal-form}) satisfying (\ref{reversion-A}). 
\end{lemma}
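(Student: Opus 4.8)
The plan is to close the system by inserting the center-stable manifold solution $v = v_\varepsilon[A_\varepsilon]$ produced in Lemma \ref{lem-nonlinear-2} into the projection equation (\ref{normal-form-perturbed}), so as to obtain a single (nonlocal but effectively second-order) equation for $A_\varepsilon$ alone, and then to solve that equation by a contraction argument anchored at the explicit homoclinic orbit $A$ of the normal form (\ref{normal-form}). First I would pass to the slow variable $Z = \varepsilon \xi$ and divide (\ref{normal-form-perturbed}) by $\varepsilon^2 \|f_{m_0}\|_{L^2}^2$. Using (\ref{inner-2}) to produce the coefficient $\tfrac12 \omega_{m_0}''(\ell_0)$ and extracting the leading cubic term $2\gamma |A_\varepsilon|^2 A_\varepsilon$ from $2\langle |A_\varepsilon f_{m_0} + v|^2 (A_\varepsilon f_{m_0} + v), f_{m_0}\rangle$ at $v = 0$, the equation takes the form
\[
\mathcal{N}(A_\varepsilon) := \tfrac12 \omega_{m_0}''(\ell_0) A_\varepsilon'' - A_\varepsilon + 2\gamma |A_\varepsilon|^2 A_\varepsilon = \varepsilon\, \mathcal{R}_\varepsilon(A_\varepsilon),
\]
where the $v$-dependent corrections together with the right-hand side $-2\frac{d}{d\xi}\langle \partial_x \tilde{v}, f_{m_0}\rangle$, rewritten through (\ref{transformation-1})--(\ref{transformation-2}), are gathered into $\mathcal{R}_\varepsilon$. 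Since Lemma \ref{lem-nonlinear-2} gives $\|v\|_{H^2_{\rm NK}} = O(\varepsilon^2)$ and $\|\partial_\xi v\|_{H^2_{\rm NK}} = O(\varepsilon^3)$, and each $\xi$-derivative of an $A_\varepsilon(\varepsilon\xi)$ profile costs a factor of $\varepsilon$, the remainder $\mathcal{R}_\varepsilon$ is bounded uniformly in $\varepsilon$ in the exponentially weighted norm that captures the decay of $A$, and it is Lipschitz in $A_\varepsilon$, the latter inherited from the smooth dependence of the integral equations of Lemma \ref{lem-nonlinear-2} on their forcing.

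The key analytic step is to invert the linearization $L := D\mathcal{N}(A)$ about the normal-form soliton on the reversibility-constrained space $X_R$ of (\ref{reversion-A}). The operator $L$ is block-diagonal with respect to the splitting into real and imaginary parts, namely $L_+ = \tfrac12 \omega_{m_0}''(\ell_0)\partial_Z^2 - 1 + 6\gamma A^2$ acting on real perturbations and $L_- = \tfrac12 \omega_{m_0}''(\ell_0)\partial_Z^2 - 1 + 2\gamma A^2$ acting on imaginary perturbations, where $A$ is the real, even ground state. The decisive observation is that $\ker L_+ = \mathrm{span}\{A'\}$ is the translational mode (odd) while $\ker L_- = \mathrm{span}\{A\}$ is the phase mode (even), so neither neutral direction survives the reversibility constraints: requiring $\mathrm{Re}(A_\varepsilon)$ even kills $A'$ and requiring $\mathrm{Im}(A_\varepsilon)$ odd kills $A$. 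Hence $L$ is invertible on $X_R$ with a bounded inverse in the exponentially weighted space. This is precisely what the reversibility symmetry buys us, and it is where the argument would fail without it.

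With $L^{-1}$ in hand I would write $A_\varepsilon = A + \tilde{A}$ and recast the equation as the fixed-point problem $\tilde{A} = L^{-1}\big[ \varepsilon \mathcal{R}_\varepsilon(A + \tilde{A}) - Q(\tilde{A}) \big] =: \mathcal{F}(\tilde{A})$, where $Q(\tilde{A}) = \mathcal{N}(A + \tilde{A}) - L\tilde{A} = O(\|\tilde{A}\|^2)$ collects the quadratic and cubic terms. A standard contraction estimate on a ball of radius $O(\varepsilon)$ in the reversible exponentially weighted space then yields a unique fixed point with $\|\tilde{A}\|_{L^\infty} \leq C\varepsilon$, which gives the asserted bound $\|A_\varepsilon - A\|_{L^\infty} \leq C\varepsilon$; since $\mathcal{F}$ preserves $X_R$ (all underlying equations inherit the reversion symmetry), the fixed point automatically satisfies (\ref{reversion-A}), and elliptic bootstrapping on the equation upgrades $A_\varepsilon$ to a smooth homoclinic orbit satisfying (\ref{C-0-extended}).

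The main obstacle is twofold. The first part is the invertibility of $L$ on $X_R$ just described, which must be established with uniform bounds in an exponentially weighted norm so that the contraction survives the passage to the infinite line. The second, more delicate part is the uniform-in-$\varepsilon$ control of $\mathcal{R}_\varepsilon$ globally: one must verify that the cut-off construction of $v_\varepsilon$ (equal to the Lemma \ref{lem-nonlinear-2} solution on $[-\xi_0,\xi_0]$ and arbitrary but bounded outside) feeds only an $O(\varepsilon)$, exponentially localized forcing into $\mathcal{N}$, so that the weighted norm of $\mathcal{R}_\varepsilon$ stays finite and the resulting $A_\varepsilon$ genuinely decays as $|Z| \to \infty$. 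Because $A$ is already exponentially small for $|Z| \geq N_0$, the arbitrariness of $v_\varepsilon$ beyond the window $[-\xi_0,\xi_0]$ only perturbs $A_\varepsilon$ at a level consistent with the $C\varepsilon$ estimate, which is what makes the homoclinic persist.
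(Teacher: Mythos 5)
Your proposal is correct and follows essentially the same route as the paper: the decisive step in both is that the linearization about the normal-form soliton $A$ diagonalizes into the pair of Schr\"{o}dinger operators $\mathcal{L}_\pm$ with kernels ${\rm span}(A')$ and ${\rm span}(A)$, whose parities are incompatible with the reversibility space $X_R$ in (\ref{reversion-A}), rendering the linearized operator invertible on the symmetry-restricted space. Your contraction mapping anchored at $A$ is an explicit implementation of the implicit function theorem that the paper invokes, with the bound $\| A_{\varepsilon} - A \|_{L^{\infty}} \leq C \varepsilon$ coming in both cases from the $\mathcal{O}(\varepsilon^2)$ estimates on $v$ and $\tilde{v}$ of Lemma \ref{lem-nonlinear-2}, followed by the same bootstrapping to obtain smoothness and (\ref{C-0-extended}).
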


\begin{proof}
	Due to the first near-identity transformation (\ref{transformation-1}), the projection equation (\ref{projection}) is transformed to the form (\ref{normal-form-perturbed}) with the leading order given by the normal form equation (\ref{normal-form}). If $\omega_{m_0}''(\ell_0) > 0$, then there exists a smooth homoclinic orbit $A \in H^{\infty}(\mathbb{R})$ satisfying the reversion symmetry (\ref{reversion-A}). The normal form equation (\ref{normal-form-perturbed}) is set for $A_{\varepsilon} \in H^2(\mathbb{R})$ on the real line $\mathbb{R}$. The linearized operator at the leading-order term $A \in H^{\infty}(\mathbb{R})$ is a diagonal composition of two Schr\"{o}dinger operators 
	\begin{align*}
	\mathcal{L}_+ &= -\frac{1}{2} \omega_{m_0}''(\ell_0) \partial_{\xi}^2 + 1 - 6 \gamma |A|^2, \\ 
		\mathcal{L}_- &= -\frac{1}{2} \omega_{m_0}''(\ell_0) \partial_{\xi}^2 + 1 - 2 \gamma |A|^2,
	\end{align*}
	with the kernels ${\rm Ker}(\mathcal{L}_+) = {\rm span}(A')$ and ${\rm Ker}(\mathcal{L}_-) = {\rm span}(A)$ in $H^2(\mathbb{R})$. Both eigenvectors of the kernel do not belong to $X_R$, hence restricting the space $H^2(\mathbb{R})$ by the two symmetries in (\ref{reversion-scalar}) 
	leads to an invertible linearized operator. The implicit function theorem implies that there is a unique solution 	$A_{\varepsilon} \in H^2(\mathbb{R})$ of the normal form equation (\ref{normal-form-perturbed}) 
	satisfying the reversion symmetry (\ref{reversion-A}). The bound 
$\| A_{\varepsilon} - A \|_{L^{\infty}} \leq C \varepsilon$ for some $C > 0$ follows from the bound (\ref{bound-on-v}) on the remainder terms $v$ and $\tilde{v}$ in (\ref{normal-form-perturbed}). Furthermore, bootstrapping of solutions of equations (\ref{dynamics}) and (\ref{projection}) with polynomial vector fields 
yields smooth solutions $v \in C^{\infty}(\mathbb{R},H^2_{\rm NK}(\Gamma_0))$ and $A_{\varepsilon} \in H^{\infty}(\mathbb{R})$. Since $A_{\varepsilon}$ is a smooth homoclinic orbit of the perturbed equation (\ref{normal-form-perturbed}), then the bound (\ref{C-0-extended}) is satisfied. 
\end{proof}

\section{Numerical simulations}
\label{sec-numerics}

In this section, we present numerical simulations which illustrate the phenomenon of traveling waves on the periodic necklace graph. The simulation have been performed with the help of Grafidi, a Python library devoted to the numerical simulation on quantum graphs with finite differences. The library and its use are described in \cite{BeDuLe22}. For simplicity, we ran the simulations in the case $L_1=L_2=\pi$, where some parameter values are explicitely available and do not need to be numerically approximated. We consider $(\sigma_0,c_0)$ at the bifurcation point (\ref{bif-point-again}) for the corresponding $\ell_0$. Given $c_0$, we find $\ell_0$ from the equation $\omega'(\ell_0)=c_0$ (which is easy if $\omega'$ is known explicitly). For the case $c_0=0.5$, we find $\ell_0 \simeq 0.3437067837$ and $\sigma_0 \simeq 0.0742138336$. From this value we obtain by direct calculations
\[
\omega_{m_0}''(\ell_0)=0.2229344892. 
\]

To ensure that the amplitude profile is scattered among a sufficiently large number of cells, we choose $ \varepsilon $ sufficiently small, e.g. $ \varepsilon  = 0.01$. The necklace is created using the Grafidi library. The number of cells has to be finite and we need to decide how many cells we include in the necklace, and what type of boundary conditions are used at the border of the necklace. For the later, we choose to use Dirichlet boundary conditions, and arrange the number of cells to be sufficiently large so that this choice does not impact the computed behavior. To fix the number of cells, we use the following rule. We first fix a number of cell $n_0$ which is such that, when our initial data is centered on the cell $\Gamma_{n_0}$, its value on the cell $\Gamma_{0}$ is close to machine precision (i.e. $10^{-16}$ in pratice). The envelope is constructed with a solution $A$ to the truncated normal form equation \eqref{normal-form}. Hence $A$ satisfies
\[
|A(\varepsilon x)|\leq C e^{-\varepsilon \kappa |x|}, \quad \kappa := \sqrt{\frac{2}{\omega''(\ell_0)}},
\]
for some $C > 0$. Note that we are at initial time $t=0$, therefore $\xi=x-ct=x$.
The main order in the expansion \eqref{formal-expansion} is $ \varepsilon  u_1$, where $u_1$ is given by \eqref{first-term}. Based on the formula for $u_1$, to be close to machine precision corresponds to having
\[
 \varepsilon  e^{- \varepsilon \kappa |x|}\leq 10^{-16},
\]
i.e., to have $|x|$ such that
\[
|x|\geq -\frac{1}{\varepsilon \kappa}\ln\left(\frac{10^{-16}}{\varepsilon}\right).
\]
Since each cell has a length of $2\pi$, we chose $n_0$ such that
\[
n_0\geq -\frac{1}{2 \pi \varepsilon  \kappa}\ln\left(\frac{10^{-16}}{\varepsilon}\right).
\]
With the values previously chosen for $\ell_0$ and $\varepsilon$, we can take
\[
n_0 = 175. 
\]
In addition, we have to take into account the fact that the solution will be traveling along the necklace. To this aim, we introduce a number of cells $n_1$ corresponding to the number of cells covered by the solution during the simulation time $T$. Since the speed of the wave solution is given by $c_0$ and the length of a cell is $2\pi$, we choose
\[
n_1\geq \frac{c_0 T}{2\pi}.
\]
Finally, the total number of cells in the necklace is chosen to be
\[
2n_0+n_1. 
\]
The Grafidi library is then used to construct the graph. Due to the
large number of cells, we keep the average number of points per
interval relatively small and chose to work with an average of $30$ points per edge. 
The initial data for the evolution is chosen as in \eqref{first-term} (with $t=0$, for which $\xi=x-ct=x$).

To simulate the nonlinear Schr\"odinger evolution on the necklace graph, we use a classical Strang splitting scheme. The idea is to split the equation in two parts, the linear part, which is
\[
i\partial_t\psi+\partial_x^2\psi = 0,
\]
and the nonlinear part
\[
i\partial_t\psi+2|\psi|^2\psi = 0.
  \]
The advantage of the splitting scheme is that the nonlinear part can be solved explicitly. 
Given a time step $\delta_t$, the splitting scheme is  given by
  \[
    \begin{cases}
      \psi^{n+\frac13}=\exp\left({i\delta_t|\psi^n|^2}\right)\psi^n,\\
    \psi^{n+\frac23} = \psi^{n+1/3} - \delta_t \partial_x^2\left(\frac{\psi^{n+\frac23}+\psi^{n+\frac13}}{2}\right),\\
            \psi^{n+1}=\exp\left({i\delta_t\left|\psi^{n+\frac23}\right|^2}\right)\psi^{n+\frac23}.
    \end{cases}
  \]
  In practice, we chose the time step $\delta_t=0.01$. The implementation in the framework of graphs with the Grafidi library is described in more details in \cite{BeDuLe22}. To keep the computation time reasonable, we performed the simulation on a time $T=1/\varepsilon=100$. 
  
The outcome of the simulation can be observed in Figure \ref{fig:necklace_evol}, where we see the pulse propagating along the necklace. The center of mass and the distance between the computed solution and the leading-order approximation $\varepsilon u_1$ in \eqref{first-term} are represented in Figure \ref{fig:necklace_th_exp}. 

\begin{figure}[htpb!]
  \centering
  \includegraphics[width=0.32\textwidth]{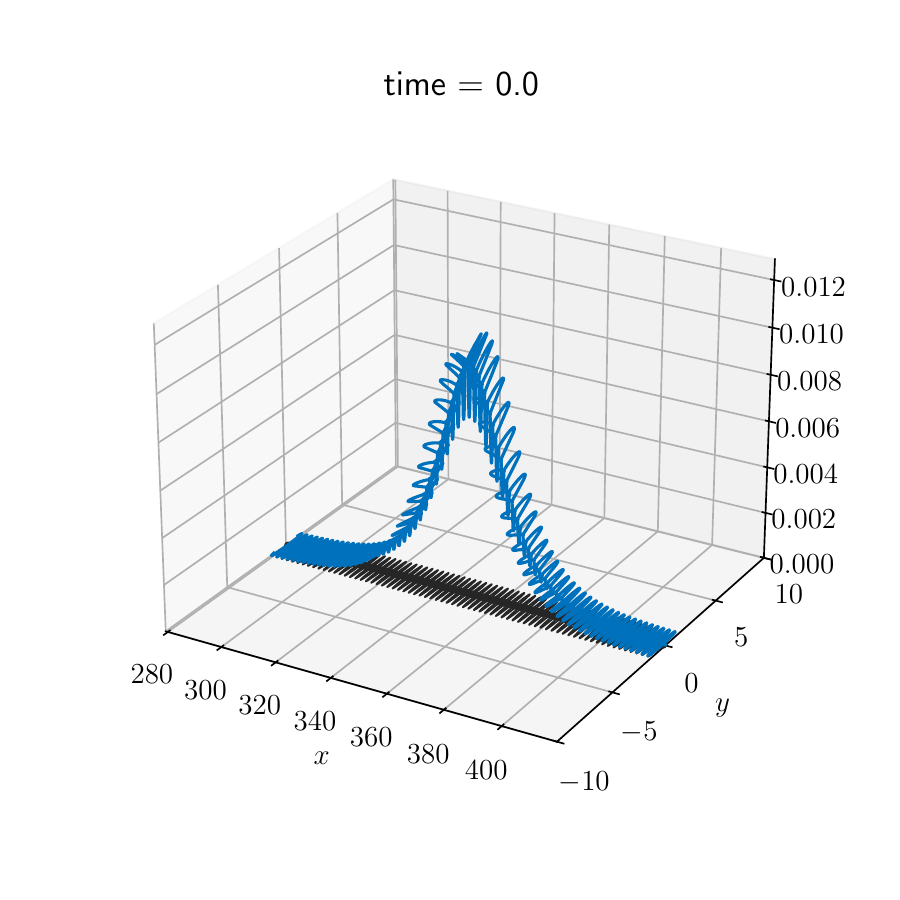}
  \includegraphics[width=0.32\textwidth]{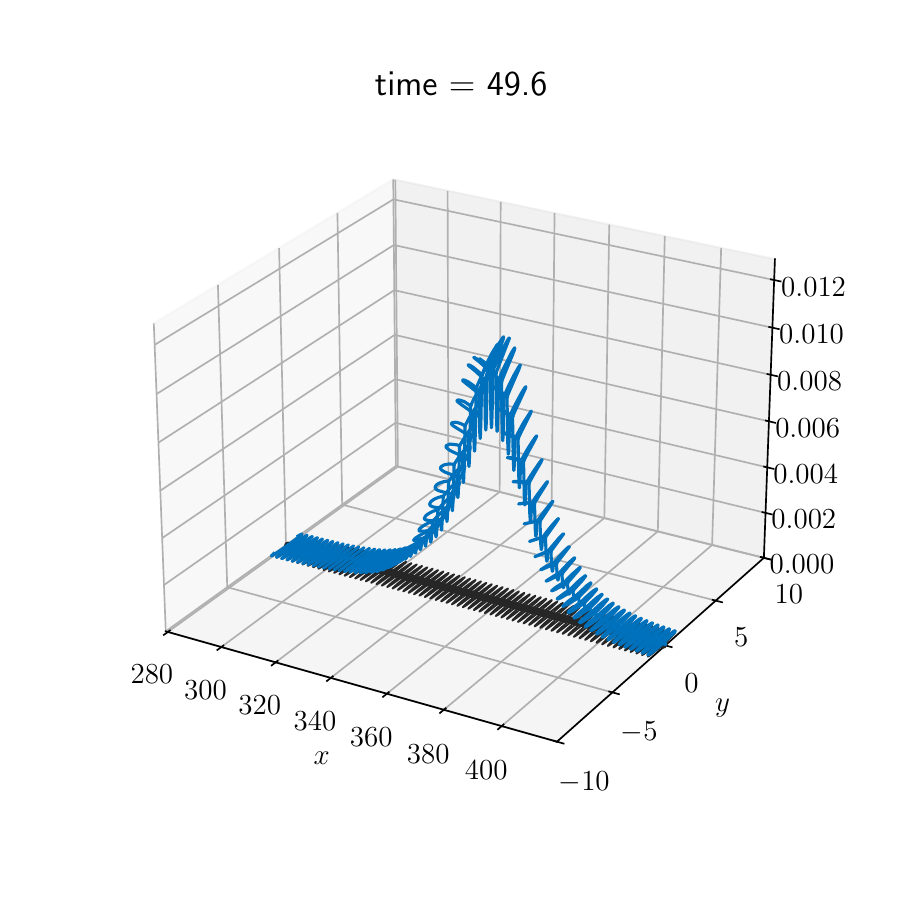}
  \includegraphics[width=0.32\textwidth]{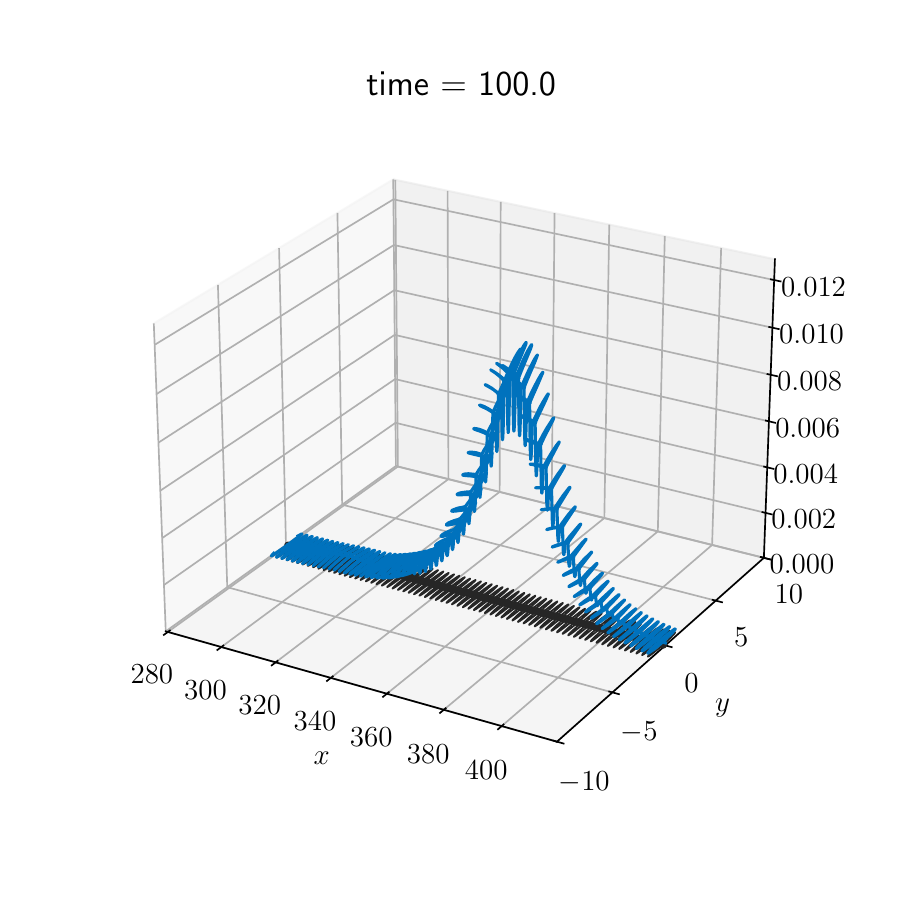}
\caption{Snapshots of the traveling wave along the necklace at initial, middle and final simulation times}
  \label{fig:necklace_evol}
\end{figure}

\begin{figure}[htpb!]
  \centering
\includegraphics[width=0.49\textwidth]{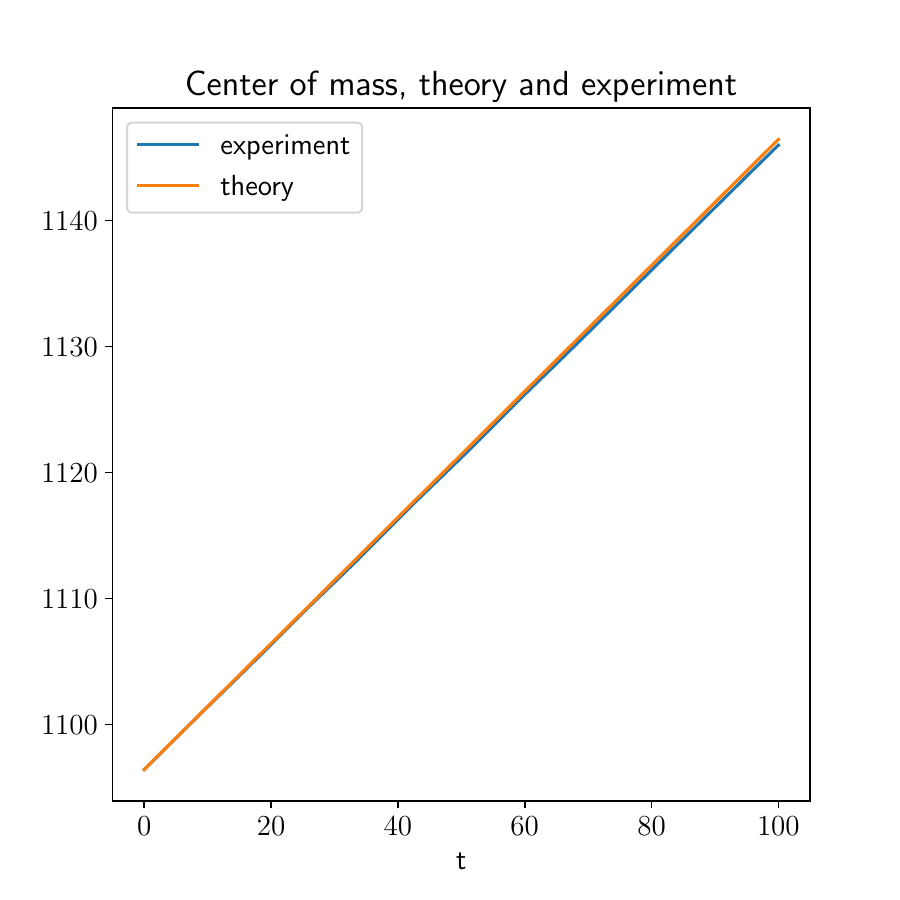}
\includegraphics[width=0.49\textwidth]{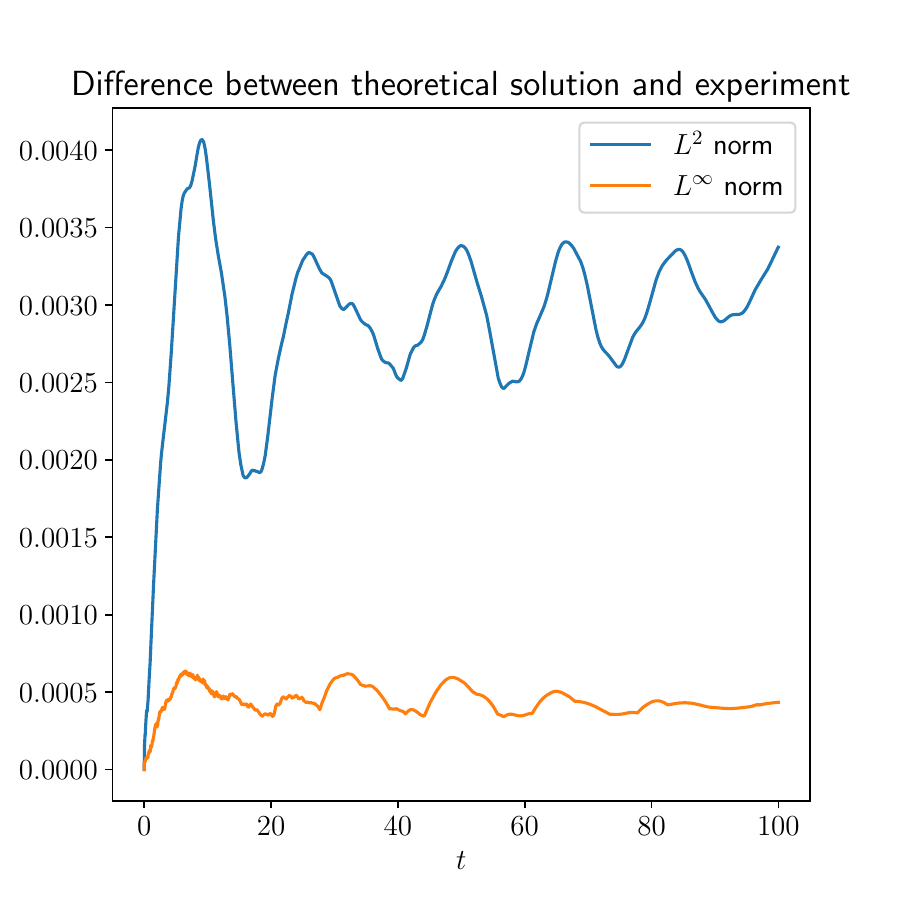}
\caption{Left: plot of the center of mass, in comparison between theory and experiment. 
Right: plot of the $L^2$ and $L^\infty$ norms of the difference between the numerical 
solution and the leading-order approximation $\varepsilon u_1$ in \eqref{first-term}.}
  \label{fig:necklace_th_exp}
\end{figure}

\section{The ladder graph}
\label{app-C}

Let $\Gamma$ be the graph in the shape of a ladder shown in 
Figure \ref{fig:ladder}, where the length of the rungs is denoted by
$L_r$ and the length of a side rail section between two rungs is denoted by
$L_s$. The periodicity cell $\Gamma_0$ of the ladder graph $\Gamma$ can
be considered to be formed by two side rail sections
$\Gamma_{0,s,\pm}$ connected by a rung $\Gamma_{0,r}$, with similar notations on the other cells $(\Gamma_n)_{n\in\mathbb Z}$. Given $n\in\mathbb Z$, the two rail
sections on $\Gamma_n$ are identified with the interval $[nL_s,(n+1)L_s]$ while the rung
is identified with $[-L_r/2,L_r/2]$. 

The set of edges formed by $(\Gamma_{n,s,\pm})_{n\in\mathbb Z}$ and $\Gamma_{n,r}$ can be denoted by $\Gamma_{s,\pm}$ and $\Gamma_r$. 
Both $\Gamma_{s,+}$ and $\Gamma_{s,-}$ are isometric to the real line $\R$. The parts of a function $\psi(x) : \Gamma \to \mathbb C$ can be denoted with similar notations, e.g. $\psi_{s,\pm}$ denotes the part of the function 
$\psi$ on $\Gamma_{s,\pm}$ and $\psi_{r}$ denotes the part of the function 
$\psi$ on $\Gamma_{r}$. 

Bloch eigenfunctions can be introduced by using (\ref{Bloch-wave}) from solutions of the 
spectral problem (\ref{spectral-prob}) with the corresponding Neumann--Kirchhoff boundary conditions 
at the vertex points. The spectral bands are represented in Figure
\ref{fig:spectral-band-ladder_Ls_1_Lr_1} for $L_s = L_r =  1$. 
The blue lines correspond to the bands with the symmetric eigenfunctions satisfying $w_{s,+} =  w_{s,-}$ and the orange lines correspond to the bands with the anti-symmetric eigenfunctions satisfying $w_{s,+} = -w_{s,-}$. The green lines denote the flat bands which only exist for a rational ratio of $L_s$ and $L_r$. 

For the case $L_s = L_r = 1$, expressions for the spectral bands are explicit. The spectral bands for the symmetric eigenfunctions are given by 
\[
\omega(\ell)=\left(\pm\arccos\left(2\cos(\ell)-1\right)+2k\pi\right)^2, \quad k\in\mathbb Z, \quad \ell \in \left[-\frac{\pi}{2},\frac{\pi}{2}\right]. 
\]
The spectral bands for the anti-symmetric eigenfunctions are given by 
\[
\omega(\ell)=\left(\arccos\left(1+2\cos(\ell)\right)+k\pi\right)^2, \quad k\in\mathbb Z, \quad 
\ell \in \left[ -\pi,-\frac{\pi}{2} \right] \cup \left[ \frac{\pi}{2},\pi \right],
\]
augmented with the lowest band at 
\[
\omega(\ell)=-\left(\arccosh\left(1+2\cos(\ell)\right)\right)^2, \qquad \ell \in \left[-\frac{\pi}{2},\frac{\pi}{2}\right]. 
\]
The flat bands are located at $\omega = (2k+1)^2 \pi^2$ for integers $k \geq 0$. 

\begin{figure}[htb!]
	\centering
	\includegraphics[width=0.49\textwidth]{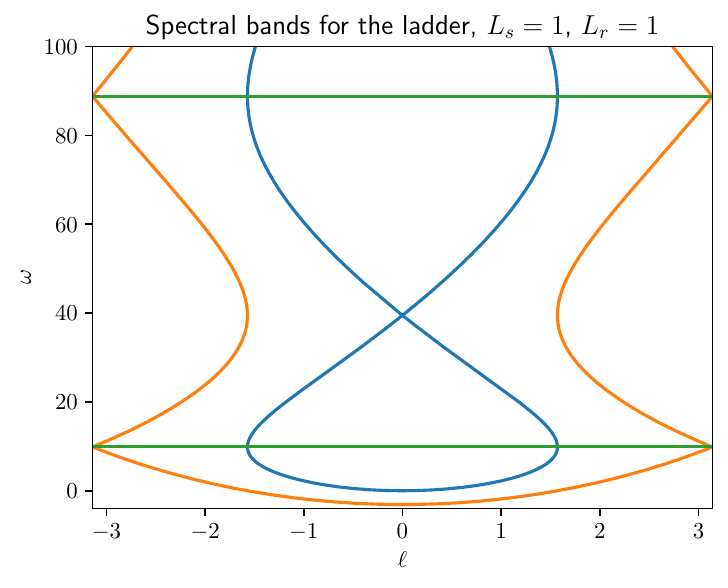}
	\caption{Spectral bands for the ladder graph. The blue lines are for the bands for the symmetric eigenfunctions, the orange lines for the anti-symmetric eigenfunctions, and the green lines for the flat bands.}
	\label{fig:spectral-band-ladder_Ls_1_Lr_1}
\end{figure}

A bifurcation for $\omega = 0$ and $\ell = 0$ is very special for the ladder graph, compared to the necklace graph. The corresponding eigenfunction (\ref{Bloch-wave}) with $\ell_0 = 0$ is constant on $\Gamma$. 
However, if $\psi_{s,+} = \psi_{s,-}$ and $\partial_x \psi_{r} = 0$, then the NLS equation (\ref{NLS}) on the ladder graph $\Gamma$ 
folds into two identical copies of the scalar NLS equation on the real line $\R$ for $\psi_{s,+}$ and $\psi_{s,-}$ since $\Gamma_{s,+}$ and $\Gamma_{s,-}$ are isometric to the real line $\R$. As a result, not only the 
standing wave solution is available explicitly but 
also it continues as the traveling wave solution given by 
\begin{equation}
\label{soliton-on-ladder}
\psi_{s,+}(t,x) = \psi_{s,-}(t,x) = 
e^{i \sigma t} \phi\left(x-ct\right),\qquad \phi(x) = e^{i\frac c2x} \sqrt{\sigma - \frac{c^2}{4}}\sech\left(\sqrt{\sigma - \frac{c^2}{4}} x\right),
\end{equation}
where $\sigma > \frac{c^2}{4}$ and $c \in \mathbb{R}$ are arbitrary parameters. 
The traveling wave is symmetrically placed on the two side rails of $\Gamma$, whereas the value on the rungs of $\Gamma$ are constants given by the values at their vertices:
\begin{equation}
\label{soliton-on-rungs}
\psi_{n,r}(t,x)=e^{i \sigma t} \phi\left((n+1)L_s-ct\right).
\end{equation}
The function $\psi$ in (\ref{soliton-on-ladder}) and (\ref{soliton-on-rungs})  is clearly a traveling wave solution of the NLS equation \eqref{NLS} on the ladder graph $\Gamma$ propagating at speed $c$ along the graph $\Gamma$. The spatial dynamical system (\ref{NLS-dynamics}) for the traveling waves (\ref{trav-solution}) also has an infinite-dimensional center manifold due to resonances with other spectral bands in Figure \ref{fig:spectral-band-ladder_Ls_1_Lr_1}. However, there exists an invariant reduction to the two-dimensional subspace isolated from the stable, unstable, and the rest of the center manifold. This special situation is very similar to the homogeneous limit of the necklace graph in Appendix \ref{app-A}. 

For any other bifurcation on the ladder graph with $\omega \neq 0$ and $\ell \neq 0$, the eigenfunction (\ref{Bloch-wave}) is no longer constant on $\Gamma_r$. In this case, there is no reduction of the NLS equation (\ref{NLS}) on the ladder graph $\Gamma$ to the NLS equation on the real line $\R$. Bifurcating standing wave solutions are not available explicitly and bifurcating traveling wave solutions generally have small oscillatory tails due to the infinite-dimensional center manifold of the spatial dynamical system. 

\appendix 

\section{Traveling waves in the homogeneous case}
\label{app-A}

For the necklace graph in the homogeneous case with $L_1 = 2\pi$ and $L_2 = 0$, the boundary conditions (\ref{vertex-5}) and (\ref{vertex-6}) are simply the periodic boundary conditions for $\tilde{\phi} \equiv \tilde{\phi}_0$ and $\tilde{\eta} \equiv \tilde{\eta}_0$. The phase space $\mathcal{D}$ of the spatial dynamical system (\ref{NLS-dynamics}) is then given by 
\begin{equation}
\label{D-periodic}
\mathcal{D} := \left\{ (\tilde{\phi},\tilde{\eta}) : \;\; \tilde{\phi} \in H^2_{\rm per}, \;\; \tilde{\eta} \in H^1_{\rm per} \right\}.
\end{equation}
In this case, traveling waves in the spatial dynamical system (\ref{NLS-dynamics}) coincide with traveling waves of the focusing NLS equation on the line $\mathbb{R}$, according to the following proposition. 

\begin{proposition}
    \label{prop-line}
    The spatial dynamical system (\ref{NLS-dynamics}) with $L_1 = 2\pi$ and $L_2 = 0$ admits a smooth homoclinic orbit for every $\sigma > \frac{c^2}{4}$, in fact, in the explicit form 
    \begin{equation}
\label{NLS-soliton}
\tilde{\phi}(\xi) = e^{\frac{ic \xi}{2}} \sqrt{\sigma - \frac{c^2}{4}} {\rm sech}\left(\sqrt{\sigma - \frac{c^2}{4}} \xi \right), \quad \tilde{\eta}(\xi) = \tilde{\phi}'(\xi). 
\end{equation}
The translational parameters of the homoclinic orbits are uniquely chosen to satisfy the reversibility constraints: ${\rm Im}(\tilde{\phi})(0) = 0$ and ${\rm Re}(\tilde{\eta})(0) = 0$.
\end{proposition}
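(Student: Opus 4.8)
The plan is to exploit the key simplification available in the homogeneous case: by the remark preceding Proposition \ref{prop-line}, the vertex conditions collapse to periodicity, so the spatial dynamical system (\ref{NLS-dynamics}) is posed on the phase space (\ref{D-periodic}). The decisive observation is that the subspace of pairs $(\tilde\phi,\tilde\eta)$ that are \emph{constant in $x$} is invariant under the flow. First I would check this directly: if $\tilde\phi$ and $\tilde\eta$ are independent of $x$, then $\partial_x^2\tilde\phi = 0$ and $\partial_x\tilde\eta = 0$, while $|\tilde\phi|^2\tilde\phi$ stays $x$-independent, so the right-hand side of (\ref{NLS-dynamics}) again lies in the constant subspace; the periodic boundary conditions of (\ref{D-periodic}) hold trivially for constants. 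On this two-complex-dimensional invariant subspace the infinite-dimensional system reduces to the planar complex second-order ODE
\begin{equation*}
\tilde\phi'' - i c\,\tilde\phi' - \sigma\,\tilde\phi + 2|\tilde\phi|^2\tilde\phi = 0,
\end{equation*}
which is precisely the traveling-wave reduction of the focusing cubic NLS equation on $\R$.

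Second, I would remove the first-order term by the gauge substitution $\tilde\phi(\xi) = e^{i c \xi/2} g(\xi)$. A short computation shows the $ic\,\tilde\phi'$ contribution cancels the cross term produced by $\tilde\phi''$, and the coefficient of the linear term shifts from $\sigma$ to $\beta^2 := \sigma - \tfrac{c^2}{4}$, leaving the autonomous stationary equation $g'' - \beta^2 g + 2|g|^2 g = 0$. The hypothesis $\sigma > \tfrac{c^2}{4}$ is exactly what makes $\beta^2 > 0$, so this equation admits the standard real homoclinic orbit $g(\xi) = \beta\,\sech(\beta\xi)$; that it solves the equation is a routine substitution using $\tfrac{d}{d\xi}\sech(\beta\xi) = -\beta\,\sech(\beta\xi)\tanh(\beta\xi)$ together with $\sech^2 = 1-\tanh^2$. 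Undoing the gauge recovers the explicit profile (\ref{NLS-soliton}).

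Third, I would record the remaining qualitative properties. Since $\sech(\beta\xi)\to 0$ exponentially as $\xi\to\pm\infty$, both $\tilde\phi$ and $\tilde\eta=\tilde\phi'$ decay to zero, so the orbit is homoclinic to $(0,0)$, and smoothness in $\xi$ is immediate from the smoothness of $\sech$. For the reversibility constraints I would evaluate at $\xi = 0$: one has $\tilde\phi(0) = \beta \in \R$, whence ${\rm Im}(\tilde\phi)(0) = 0$, and $\tilde\eta(0) = \tilde\phi'(0) = \tfrac{ic\beta}{2}$ is purely imaginary, whence ${\rm Re}(\tilde\eta)(0) = 0$. The full family of homoclinic orbits is generated from this one by the gauge symmetry $\tilde\phi\mapsto e^{i\theta}\tilde\phi$ and the translation $\xi\mapsto\xi-\xi_0$; imposing the two real reversibility constraints then pins down $\theta$ and $\xi_0$ uniquely (modulo the residual reflection $\tilde\phi\mapsto-\tilde\phi$), giving the uniqueness claim.

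The only genuinely conceptual step is the reduction to constant-in-$x$ solutions; everything downstream is the classical NLS soliton computation. The point worth flagging is that this finite-dimensional invariant subspace sits inside the otherwise infinite-dimensional center manifold discussed in Section \ref{sec-3}, so the existence of a \emph{true} homoclinic orbit with no oscillatory tail is special to the homogeneous case, consistent with the paper's broader message that such tails are generically unavoidable.
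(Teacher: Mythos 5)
Your proposal is correct and follows essentially the same route as the paper: your constant-in-$x$ invariant subspace is exactly the paper's $X_0$ (the $k=0$ Fourier mode of the decomposition (\ref{decomposition})), and both reduce the system to the scalar ODE (\ref{NLS-trav}) solved by the explicit soliton (\ref{NLS-soliton}). Your added gauge substitution $\tilde\phi = e^{ic\xi/2}g$ and the explicit check of the reversibility constraints at $\xi=0$ merely spell out computations the paper states directly.
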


\begin{proof}
We use the Fourier basis in $\mathcal{D}$, given by (\ref{D-periodic}),
and decompose the solution in the form:
\begin{equation}
\label{decomposition}
\tilde{\phi}(\xi,x) = \sum_{k \in \Z} u_k(\xi) e^{i k x}, \quad 
\tilde{\eta}(\xi,x) = \sum_{k \in \Z} r_k(\xi) e^{i k x}.
\end{equation}
The spatial dynamical system (\ref{NLS-dynamics}) is now rewritten in the form 
\begin{equation}
\label{Fourier-modes-k}
\frac{d \vec{w}_k}{d \xi}  = A_k \vec{w}_k + \left( \begin{matrix} 0 \\ N_k \end{matrix} \right),
\end{equation}
where 
$$
\vec{w}_k := \begin{pmatrix} u_k  \\ r_k 
\end{pmatrix}, \qquad A_k := \begin{pmatrix} 0  & 1  \\ 
k^2 + \sigma & -2ik +ic \end{pmatrix}
$$
and 
$$
N_k := -2\sum\limits_{k_1 \in \Z} u_{k_1} u_{k_2} \bar{u}_{k_1+k_2-k}.
$$
Since 
\begin{equation}
    \label{reduction-X-0}
N_k |_{X_0} = 0, \quad k \in \Z\backslash \{0\}, \quad  \mbox{\rm at} \;\; X_0 := \left\{ 
\{ \vec{w}_k \}_{k \in \Z} : \quad \vec{w}_k = \vec{0}, \quad k \in \Z\backslash \{0\} \right\},
\end{equation}
the system (\ref{Fourier-modes-k}) admits the invariant reduction in $X_0$, for which it reduces to the second-order system:
\begin{equation}
\label{Fourier-modes-0}
\frac{d}{d \xi} \begin{pmatrix} u_0 \\ r_0 
\end{pmatrix} = \begin{pmatrix} 0 & 1 \\
\sigma & ic \end{pmatrix} \begin{pmatrix} u_0 \\ r_0
\end{pmatrix} - \begin{pmatrix} 0 \\ 
2|u_0|^2 u_0 \end{pmatrix},
\end{equation}
which is equivalent to the scalar  second-order equation
\begin{equation}
\label{NLS-trav}
u_0''(\xi) -i c u_0'(\xi) - \sigma u_0(\xi) + 2 |u_0(\xi)|^2 u_0(\xi) = 0.
\end{equation}
This is the stationary NLS equation on the real line with the exact solution (\ref{NLS-soliton}) for the homoclinic orbit, uniquely selected by the reversibility constraints: ${\rm Im}(\tilde{\phi})(0) = 0$ and ${\rm Re}(\tilde{\eta})(0) = 0$. The homoclinic orbit exists for $\sigma > \frac{c^2}{4}$.
\end{proof}

\begin{remark}
     The homoclinic orbit of Proposition \ref{prop-line} bifurcates at $\sigma = \frac{c^2}{4}$ for $\sigma > \frac{c^2}{4}$, in agreement with Example \ref{example-expansion}. In addition to the bifurcating eigenvalues (blue dots in Figure \ref{fig:spectral-picture-L1_0}), there exist infinitely many eigenvalues on the imaginary axis (see Figure \ref{fig:spectral-picture-L1_0}) which generally result in the small oscillatory tails. However, due to the existence of the invariant reduction on $X_0$, see (\ref{reduction-X-0}), there exists a true solitary wave solution with zero oscillatory tails given by Proposition \ref{prop-expansion}. The invariant reduction on $X_0$ is broken on the necklace graph $\Gamma$ with $L_1 \neq 2\pi$ and $L_2 \neq 0$, for which the small nonzero oscillatory tails generally exist. 
\end{remark}

\section{Variational formulation}
\label{app-B}

Minimization techniques are a classical tool to obtain the existence  of standing wave profiles of the NLS equations  (see e.g. \cite{BeLi83-1}). On the contrary, as was mentioned in Section \ref{sec-2}, existence of solutions of the spatial NLS equation \eqref{NLS-potential} is not easily obtained via minimization. Indeed, it turns out that the classical minimization problems used to get a critical point of the action functional $\Lambda_{\sigma,c}$, defined in \eqref{var-form}, degenerate in the present setting, even when $c=0$. We illustrate this fact with the following result, in which we consider minimization of the quadratic part of $\Lambda_{\sigma,0}$ over a potential part. We have generalized  here the cubic nonlinearity to a super-linear power nonlinearity. Minimization fails due to the lack of control of the quadratic part  over the potential norm. In other words, in comparison to the classical case, what is missing here is a Gagliardo-Nirenberg type inequality.

  \begin{proposition}
    \label{prop:min-prob}
    Let $p>1$ and $\sigma>0$. Define the space $\mathcal H$ by 
  \[
    \mathcal H=\left\{
      \varphi\in H^1(\mathbb R\times \Gamma_0)\cap C^0(\mathbb R\times \bar\Gamma_0)
      :\varphi_0(\xi,L_1)=\varphi_\pm(\xi,L_1),\,\varphi_0(\xi,0)=\varphi_\pm(\xi,2\pi)
    \right\}.
  \]
    The infimum of the constrained variational problem 
    \begin{equation}
      \label{eq:min-prob}
      \inf_{\varphi \in \mathcal{H}} \left\{\int_{\mathbb{R} \times \Gamma_0} \left(  |(\partial_{\xi} + \partial_x) \varphi|^2 + \sigma |\varphi|^2 
        \right) d\xi dx : \quad  \int_{\mathbb{R} \times \Gamma_0}|\varphi|^{p+1} d\xi dx=1\right\}     
    \end{equation}
    is $0$ and it is not achieved. 
  \end{proposition}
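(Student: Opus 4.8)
The plan is to prove the statement by exhibiting an explicit minimizing family together with a one-line coercivity argument for non-attainment. Write $Q(\varphi)$ for the quadratic functional appearing in \eqref{eq:min-prob}. The decisive structural observation is that $Q$ only sees the \emph{diagonal} derivative $\partial_\xi+\partial_x$, whose null space consists of functions of $x-\xi$; concentrating mass along this null direction is invisible to the gradient part of $Q$, while still collapsing the $L^2$ mass relative to the $L^{p+1}$ mass. This is exactly the mechanism by which the Gagliardo--Nirenberg-type control $\|\varphi\|_{L^{p+1}}^2\le C\,Q(\varphi)$ fails.

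First I would remove the vertex conditions defining $\mathcal H$ by localizing on a single edge. Fix a bump $g\in C_c^\infty(0,L_1)$ with $\mathrm{supp}\,g\subset[a,b]\subset(0,L_1)$, and agree to put all the mass on the segment $\Gamma_{0,0}$, setting $\varphi\equiv 0$ on $\Gamma_{0,\pm}$. Any such function vanishes identically near the vertices $x\in\{0,L_1,2\pi\}$, so every continuity condition in $\mathcal H$ reduces to $0=0$; being smooth and compactly supported it also lies in $H^1(\R\times\Gamma_0)\cap C^0(\R\times\bar\Gamma_0)$, hence in $\mathcal H$.

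Next I would introduce, for a width parameter $w>0$ and a fixed profile $\rho\in C_c^\infty(\R)$, the family $\varphi_w(\xi,x):=\rho\!\left(\tfrac{x-\xi}{w}\right)g(x)$ on $\Gamma_{0,0}$ and $\varphi_w:=0$ on $\Gamma_{0,\pm}$. The key cancellation is that the singular $w^{-1}\rho'$ contributions cancel:
\[
(\partial_\xi+\partial_x)\varphi_w=\rho\!\left(\tfrac{x-\xi}{w}\right)g'(x),
\]
so no factor $w^{-1}$ survives. A change of variables $u=(x-\xi)/w$ in the $\xi$-integral then yields, with $\rho$-norms over $\R$ and $g$-norms over $(0,L_1)$,
\[
\|\varphi_w\|_{L^2}^2=w\,\|\rho\|_{L^2}^2\|g\|_{L^2}^2,\qquad \|(\partial_\xi+\partial_x)\varphi_w\|_{L^2}^2=w\,\|\rho\|_{L^2}^2\|g'\|_{L^2}^2,
\]
\[
\|\varphi_w\|_{L^{p+1}}^{p+1}=w\,\|\rho\|_{L^{p+1}}^{p+1}\|g\|_{L^{p+1}}^{p+1}.
\]
Consequently the scale-invariant quotient obtained after rescaling the amplitude to meet the constraint satisfies $Q(\varphi_w)/\|\varphi_w\|_{L^{p+1}}^2=C\,w^{(p-1)/(p+1)}$, where $C$ depends only on $\rho,g,\sigma$. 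Since $p>1$ the exponent is positive, so this tends to $0$ as $w\to0$; setting $\lambda_w:=\|\varphi_w\|_{L^{p+1}}^{-1}$ gives an admissible sequence $\lambda_w\varphi_w$ with $Q(\lambda_w\varphi_w)\to0$, whence the infimum in \eqref{eq:min-prob} is $0$.

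Finally, non-attainment is immediate from $\sigma>0$: we have $Q(\varphi)\ge\sigma\|\varphi\|_{L^2}^2\ge0$, with equality only if $\varphi\equiv0$, which is excluded by the constraint $\|\varphi\|_{L^{p+1}}=1$; hence the value $0$ is never reached. The only genuinely delicate point — and the one I expect to be the conceptual obstacle rather than a computational one — is recognizing that the concentration must occur in the null direction $x-\xi$ of $\partial_\xi+\partial_x$. The more natural Ansätze $\chi(\varepsilon\xi)F(x-\xi)$ with a slowly varying or concentrating envelope in $\xi$ give a ratio that blows up at both ends of the scaling range and is minimized at an interior scale, so they stay bounded away from $0$ and fail to reveal that the infimum vanishes.
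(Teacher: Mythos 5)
Your proof is correct and uses exactly the paper's mechanism: a product ansatz whose factor in the null variable $x-\xi$ is annihilated by $\partial_\xi+\partial_x$, followed by concentration of that factor, which drives the ratio $\norm{\varphi}_{L^2}/\norm{\varphi}_{L^{p+1}}$ --- and hence the constrained quadratic functional --- to zero, with non-attainment from the strict positivity $\sigma\norm{\varphi}_{L^2}^2>0$ on the constraint set. The only deviations are harmless simplifications: you localize on a single edge so the vertex conditions hold vacuously (the paper instead extends $g\in H^1_{\rm C}(\Gamma_0)$ periodically over $\Gamma$), and you scale a fixed profile directly to get the rate $w^{(p-1)/(p+1)}$ rather than, as the paper does, solving the partial minimization in the $\xi$-factor exactly via the one-dimensional ground state before concentrating the null-direction factor.
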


  \begin{proof}
    To show that the minimization problem \eqref{eq:min-prob} does not admit a solution, we construct a sequence of functions $(\varphi_n)_{n \in \mathbb{N}} \subset \mathcal H$ such that
    \[
      \int_{\mathbb{R} \times \Gamma_0}|\varphi_n|^{p+1} d\xi dx=1,\quad \int_{\mathbb{R} \times \Gamma_0} \left(  |(\partial_{\xi} + \partial_x) \varphi_n|^2 + \sigma |\varphi_n|^2 
        \right) d\xi dx\to 0\text{ as }n\to \infty,
      \]
      and $(\varphi_n)_{n \in \mathbb{N}}$ does not admit a limit in $\mathcal H$. We will use a separation of variable technique. Let $f\in H^1(\R)$ and $g\in H_{\rm C}^1(\Gamma_0)$. The function $g$ on $\Gamma_0$ is identified with its three components $(g_0,g_+,g_-)$ defined on $[0,2\pi]$. We extend  $(g_0,g_+,g_-)$ to $\mathbb R$ by periodicity and denote the corresponding function on $\Gamma$ also by $g$. We define $\varphi\in\mathcal H$ by
      \[
        \varphi(\xi,x)=f(\xi)g(\xi-x).
      \]
      By construction, $\varphi$ satisfies
      \begin{align*}
                \int_{\mathbb{R} \times \Gamma_0}|\varphi|^{p+1} d\xi dx&=
\left(\int_{\mathbb{R}}|f|^{p+1} d\xi 
\right)
\left(\int_{ \Gamma_0}|g|^{p+1} dx
\right)
        ,\\
        \int_{\mathbb{R} \times \Gamma_0}|\varphi|^{2} d\xi dx&=
\left(\int_{\mathbb{R}}|f|^{2} d\xi 
\right)
\left(\int_{ \Gamma_0}|g|^{2} dx
\right).
      \end{align*}
      Moreover, we have
      \[
        (\partial_\xi+\partial_x)\varphi(\xi,x)=f'(\xi)\cdot g(\xi-x)+f(\xi)\cdot g'(\xi-x)-f(\xi)\cdot g'(\xi-x)=f'(\xi)\cdot g(\xi-x).
      \]
      Therefore
      \[
\int_{\mathbb{R} \times \Gamma_0} |(\partial_{\xi} + \partial_x) \varphi|^2 
d\xi dx=
\int_{\mathbb{R} \times \Gamma_0}
|f'(\xi)\cdot g(\xi-x)|^2d\xi dx =
\left(
\int_{\mathbb{R}}|f'|^2d\xi
\right)
\left(
\int_{\Gamma_0}|g|^2dx
\right).
\]
Let $g\in H^1_{\rm C}(\Gamma_0)$ be fixed and define $\mu,\nu>0$ by
\[
\norm{g}_{L^{p+1}(\Gamma_0)}=\mu,\quad  \norm{g}_{L^{2}(\Gamma_0)}=\nu. 
\]
Consider the partial  minimization problem
  \begin{multline*}
    \min_{f \in H^1(\mathbb{R})} \Big\{\int_{\mathbb{R} \times \Gamma_0} \left(  |(\partial_{\xi} + \partial_x) \varphi|^2 + \sigma |\varphi|^2 
        \right) d\xi dx
        :\\
        \varphi(\xi,x)=f(\xi)\cdot g(\xi-x), \;\; \int_{\mathbb{R} \times \Gamma_0}|\varphi|^{p+1} d\xi dx=1\Big\}.
      \end{multline*}
    As $g$ is fixed, using the notation previously introduced, this problem might be rewritten as
    \[
    \min_{f \in H^1(\mathbb{R})} \left\{\nu^2\left(\int_{\mathbb{R} }  |f'|^2 + \sigma\int_{\mathbb{R} }|f|^2d\xi\right) : \quad \int_{\mathbb{R} }|f|^{p+1} d\xi=\mu^{-(p+1)}\right\}.
    \]
    This problem  is well-known \cite{BeLi83-1} to admit for any $1<p<\infty$ a unique positive even minimizer, which is constructed with the classical ground state $Q$, i.e.
\[
Q(\xi )=\left(\frac{(p+1)\sqrt{\sigma}}{2}\right)^{\frac{1}{p -1}} \sech \left(\frac{\left(p -1\right)\sqrt{\sigma} \xi }{2}\right)^{\frac{2}{p -1}},
\]
and is given by
\[
  f_{\mu^{-1}}=\frac{\mu^{-1} Q}{\norm{Q}_{L^{p+1}(\mathbb R)}}.
\]
The value of the minimum is then given by
\begin{align*}
  \nu^2\left(\int_{\mathbb{R} }  |f_{\mu^{-1}}'|^2 + \sigma\int_{\mathbb{R} }|f_{\mu^{-1}}|^2d\xi\right) &=
  \left(\frac{\nu}{\mu}\right)^2\left(
\norm{Q'}_{L^{2}(\mathbb R)}^2+\sigma \norm{Q}_{L^{2}(\mathbb R)}^2
\right)\\
&=
 \left(\frac{\norm{g}_{L^2(\Gamma_0)}}{\norm{g}_{L^{p+1}(\Gamma_0)}}\right)^2
\norm{Q}_{L^{p+1}(\mathbb R)}^{p+1},
\end{align*}
where we have used $\norm{Q'}_{L^{2}(\mathbb R)}^2+\sigma \norm{Q}_{L^{2}(\mathbb R)}^2 = \norm{Q}_{L^{p+1}(\mathbb R)}^{p+1}$ for the ground state $Q$ in the last equality. 
The construction of a minimizing sequence for the original problem is reduced by using the separation of variables to the search for a sequence of functions $(g_n)_{n \in \mathbb{N}} \subset H^1_{\rm C}(\Gamma_0)$ such that
\[
\lim_{n\to\infty}\frac{\norm{g_n}_{L^2(\Gamma_0)}}{\norm{g_n}_{L^{p+1}(\Gamma_0)}}=0.
  \]
Such  a sequence can easily be constructed, taking e.g. a function $g:\Gamma\to\mathbb R$ compactly supported in $[0,L_1]$ and considering $g_n(x)=n^{\frac{1}{p+1}}g(nx)$, $n \in \mathbb{N}$. This concludes  the proof. 
    \end{proof}

\bibliographystyle{siam} %
\bibliography{biblio}

\end{document}